\newcommand{\defi}[1]{\textsf{#1}} 
\newcommand\wt[1]{\widetilde{#1}}
\newtheorem{thm}{Theorem}
\newtheorem{defn}[thm]{Definition}
\newtheorem{cor}[thm]{Corollary}
\newtheorem{lem}[thm]{Lemma}
\newtheorem{prop}[thm]{Proposition}
\newtheorem{ex}[thm]{Example}
\newcommand{\p}{\mathbf{P}}
\newcommand{\Q}{\mathbf{Q}}
\newcommand{\Z}{\mathbf{Z}}
\newcommand{\m}{\mathfrak{m}}
\newcommand{\OO}{\mathcal{O}}
\newcommand{\DD}{\mathcal{D}}
\newcommand{\F}{\mathbf{F}}
\newcommand{\EE}{E}
\newcommand{\mm}{\mathfrak{m}}
\newcommand{\Spec}{\operatorname{Spec}}
\newcommand{\wht}{\operatorname{wt}}
\newcommand{\sm}{\operatorname{sm}}
\newcommand{\PGL}{\textnormal{PGL}}
\begin{document}

\title{On the structure of elliptic curves over finite extensions of $\Q_p$ with additive reduction}
\author{Michiel Kosters and Ren\'e Pannekoek}
\date{\today}

\maketitle

\begin{abstract} \noindent Let $p$ be a prime and let $K$ be a finite extension of $\Q_p$. Let $E/K$ be an elliptic curve with additive reduction. In this paper, we study the topological group structure of the set of points of good reduction of $E(K)$. In particular, if $K/\Q_p$ is unramified, we show how one can read off the topological group structure from the Weierstrass coefficients defining $E$.
\end{abstract}

\section{Introduction}

\noindent
In this article, we fix a prime $p$. Let $K/\Q_p$ be a finite extension. Let $\mathcal{O}_K$ be the ring of integers of $K$ with maximal ideal $\m_K$ and residue field $k$. Let $E/K$ be an elliptic curve 
with additive reduction, given by a Weierstrass equation over $\mathcal{O}_K$ of the form
$$
Y^2 + a_1 XY + a_3 Y = X^3 + a_2 X^2 + a_4 X + a_6,~~~~\textnormal{$a_i \in \m_K$ for each $i$}.
$$
We denote by $E_0(K) \subset E(K)$ the subgroup of points that reduce to a non-singular point of the reduction curve defined over $k$. 

The purpose of this paper is to investigate the structure of $E_0(K)$ as a topological group and as a $\Z_p$-module. We first show that $p$-adic topology on $E_0(K)$ from the embedding into $\p^2(K)$ agrees with the topology from the $\Z_p$-module structure (Proposition \ref{888} and Proposition \ref{fund_isom}). Our main theorem is the following, where $N_{k/\F_p}: k \to \F_p$ is the norm map.

\begin{thm} \label{777}
\label{main}
Assume that $K/\Q_p$ is unramified of degree $n$ with ring of integers $\mathcal{O}_K$, maximal ideal $\m_K$ and residue field $k$. Let $E/K$ be an elliptic curve given by a Weierstrass equation over $\mathcal{O}_K$ of the form
$$
Y^2 + a_1 XY + a_3 Y = X^3 + a_2 X^2 + a_4 X + a_6,
$$
with $a_i \in \m_K=p \mathcal{O}_K$ for each $i$. Then one has $E_0(K) \cong_{\Z_p} \Z_p^n$, except in the following four cases:
\begin{itemize}\itemsep=0pt
\item[(i)] $p=2$ and the equation $\overline{a_3/2}x^3+\overline{a_1/2}x-1=0$ has a solution in $k$;
\item[(ii)] $p=3$ and $N_{k/\F_p}(\overline{8a_2/3})=1$;
\item[(iii)] $p=5$ and $N_{k/\F_p}(\overline{3a_4/5})=1$;
\item[(iv)] $p=7$ and $N_{k/\F_p}(\overline{4a_6/7})=1$.
\end{itemize}
In case (i), one $E_0(\Q_2) \cong_{\Z_2} \Z_2^n \times (\Z/2\Z)^b$ where $2^b$ is the number of solutions to $\overline{a_3/2}X^4+\overline{a_1/2}X^2-X=0$ in $k$. In the cases (ii)-(iv), one has $E_0(\Q_p) \cong_{\Z_p} \Z_p^n \times \Z/p\Z$. 
\end{thm}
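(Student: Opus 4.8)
The plan is to isolate the torsion of $E_0(K)$ and compute it with division polynomials. Since every $a_i\in\m_K$, the reduction $\widetilde{E}$ is the cuspidal cubic $Y^2=X^3$ and its smooth locus $\widetilde{E}_{\mathrm{ns}}$ is $\G_a$ over $k$, so I would start from the exact sequence of topological groups $0\to E_1(K)\to E_0(K)\to\widetilde{E}_{\mathrm{ns}}(k)\to 0$, where $E_1(K)$ is the kernel of reduction, canonically $\widehat{E}(\m_K)$. The reduction of the formal group $\widehat{E}$ is $\widehat{\G}_a$, on which $[p]$ is zero; hence $[p]_{\widehat{E}}(s)=p\,g(s)$ with $g(s)=s+\cdots$ an invertible power series, so $\widehat{E}(\m_K)$ is torsion-free and, with Propositions \ref{888} and \ref{fund_isom}, $E_1(K)\cong_{\Z_p}\Z_p^n$ for every $p$. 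As $\widetilde{E}_{\mathrm{ns}}(k)\cong k$ is finite, $E_0(K)$ is then a finitely generated $\Z_p$-module of rank $n$, so $E_0(K)\cong_{\Z_p}\Z_p^n\times T$ with $T$ its torsion subgroup, which is a finite $p$-group (prime-to-$p$ torsion would inject into the $p$-group $\widetilde{E}_{\mathrm{ns}}(k)$). Everything comes down to identifying $T$.

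Next, since $[p]$ kills $\widetilde{E}_{\mathrm{ns}}$ it sends $E_0(K)$ into $E_1(K)$, on which it is injective; hence $E_0(K)[p^\infty]=E_0(K)[p]$, and the snake lemma for $[p]$ on the exact sequence (using $E_1(K)[p]=0$ and $[p]E_1(K)=pE_1(K)$) gives $0\to E_0(K)[p]\to\widetilde{E}_{\mathrm{ns}}(k)\xrightarrow{\ \delta\ }E_1(K)/pE_1(K)$, so $T\cong\ker\delta\subseteq\widetilde{E}_{\mathrm{ns}}(k)\cong k$. To make $\delta$ explicit I would use the $p$-division polynomial $\psi_p$: its reduction vanishes identically on $\widetilde{E}$ (as $[p]=0$ on $\G_a$, using $\overline{2}=0$ in the case $p=2$), so after substituting the given $a_i$ the coefficients of $\psi_p$ are divisible by $p$; write $\psi_p=p\,\tilde\psi_p$. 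For $P\in E_0(K)$ reducing to $Q\ne O$ — automatically a non-cuspidal smooth point, so $X(P),Y(P)\in\OO_K^\times$, and $\psi_{p\pm1}(P)\in\OO_K^\times$ since $\G_a$ has no nonzero $2$- or $(p\pm1)$-torsion — the formula $X([p]P)=X(P)-\psi_{p-1}(P)\psi_{p+1}(P)/\psi_p(P)^2$ yields $\val_K\!\big(s([p]P)\big)=1+\val_K(\tilde\psi_p(P))$ for the standard parameter $s=-X/Y$ (which induces $\widetilde{E}_{\mathrm{ns}}(k)\xrightarrow{\sim}k$). Identifying $E_1(K)/pE_1(K)\cong\m_K/\m_K^2\cong k$ via $s$, this shows $\delta(Q)=0$ iff $\overline{\tilde\psi_p}(Q)=0$; hence $T\cong\ker\delta=\{O\}\cup\{Q\in\widetilde{E}_{\mathrm{ns}}(k)\setminus\{O\}:\overline{\tilde\psi_p}(Q)=0\}$. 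For $p=2$ one argues identically with $\psi_2=2Y+a_1X+a_3$ and $\tilde\psi_2=Y+\tfrac{a_1}{2}X+\tfrac{a_3}{2}$.

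Finally I would evaluate $\overline{\tilde\psi_p}$ on $\widetilde{E}_{\mathrm{ns}}=\{Y^2=X^3\}$, parametrized by $X=s^{-2}$, $Y=-s^{-3}$. Viewing $\psi_p$ as a polynomial in $b_2,b_4,b_6,b_8$ over $\Z$: monomials of total $b$-degree $\ge 2$, and all monomials involving $b_8$, acquire valuation $\ge 2$ after substitution and drop out; the $b$-degree-$0$ part is $pX^{(p^2-1)/2}$ (the division polynomial of $Y^2=X^3$, all of whose roots lie at the cusp), contributing $X^{(p^2-1)/2}$; and by weighted-homogeneity the $b$-degree-$1$ part is $\sum_i\gamma_ib_iX^{(p^2-1-i)/2}$ with $\gamma_i\in\Z$, whose $i$-th term survives only if $p\nmid\gamma_i$ and $\val_K(b_i)=1$. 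The key input is that, for $i\in\{2,4,6\}$, one has $p\nmid\gamma_i$ precisely when $i=p-1$; this is read off from the reduction modulo $p$ of the $p$-division polynomial of the one-parameter families obtained by turning on a single $b_i$ — the nodal cubic $Y^2=X^3+cX^2$ (smooth locus $\G_m$, whose $p$-division polynomial modulo $p$ is $X^{p(p-1)/2}$ up to a unit), and the curves $Y^2=X^3+AX$ and $Y^2=X^3+B$ of $j$-invariant $1728$ and $0$ (where the degree modulo $p$ is governed by the Hasse invariant) — which forces the lowest nonvanishing $b_i$-degree to be $(p-1)/i$. Hence for $p\ge 11$ one gets $\overline{\tilde\psi_p}=X^{(p^2-1)/2}$ on $\widetilde{E}_{\mathrm{ns}}$, with no zero off $O$, so $T=0$ and $E_0(K)\cong_{\Z_p}\Z_p^n$. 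For $p\in\{3,5,7\}$ one gets $\overline{\tilde\psi_p}=X^{p(p-1)/2}\big(X^{(p-1)/2}+\gamma_{p-1}\,\overline{b_{p-1}/p}\big)$, with $\overline{b_{p-1}/p}$ a unit multiple of $\overline{a_{p-1}/p}$ (the normalization producing the constants $8,3,4$); its zeros off $O$ are the $s\in k^\times$ with $s^{p-1}=-1/(\gamma_{p-1}\,\overline{b_{p-1}/p})$, and since $(k^\times)^{p-1}=\ker N_{k/\F_p}$ and $(p-1)\mid\#k^\times$, this set is non-empty exactly under the stated norm condition, in which case it has $p-1$ elements and $T\cong\Z/p\Z$; this yields (ii)–(iv). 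For $p=2$, $\overline{\tilde\psi_2}$ vanishes at $s$ iff $\overline{a_3/2}\,s^3+\overline{a_1/2}\,s-1=0$, so $\ker\delta$ (which contains $O$, i.e.\ $s=0$) is in bijection with the solutions of $\overline{a_3/2}X^4+\overline{a_1/2}X^2-X=0$: this is case (i), and $|T|=2^b$.

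\smallskip
\noindent The hard part will be the arithmetic of the coefficients $\gamma_2,\gamma_4,\gamma_6$ of $\psi_p$ — proving uniformly in $p$ exactly which are divisible by $p$ (this is what forces the exceptional primes to be $\le 7$) and pinning down $\gamma_{p-1}\bmod p$ for $p=3,5,7$ — which requires a workable description of division polynomials modulo $p$ for the cuspidal and nodal cubics and for curves of $j$-invariant $0$ and $1728$. The structural steps (the exact sequence, the reduction $T\cong\ker\delta$, and the valuation identity for $s([p]P)$) are routine by comparison.
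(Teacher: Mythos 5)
Your strategy is sound and, structurally, it is the same as the paper's: the exact sequence $0 \to E_1(K) \to E_0(K) \to \wt{\EE}_{\textnormal{sm}}(k) \to 0$, the reduction of the torsion computation to the connecting map $\delta\colon k \to E_1(K)/pE_1(K)\cong k$ (this is precisely the role of Lemma \ref{128}), and the additive-polynomial/norm criterion (Lemma \ref{183}) all reappear in the paper. Where you genuinely diverge is in how $\delta$ is made explicit. The paper identifies all of $E_0(K)$ with the formal group $\widehat{\EE}(\OO_K)$ (Theorem \ref{tocomputewith}) and reads $\delta$ off as the polynomial $[p](T)/p \bmod p$; then Lemma \ref{mult_by_p} (weights, together with $p \mid b_j$ for $p \nmid j$) eliminates everything except the coefficient of $a_{p-1}T^p$, which is evaluated by one truncated expansion of $[p](T)$ for each $p\le 7$, and vanishes for weight reasons when $p>7$. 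You instead stay on the Weierstrass model, use the division-polynomial formula and the valuation identity for $s([p]P)$, and reduce to the mod-$p$ behaviour of the $b$-linear coefficients $\gamma_i$ of the universal $\psi_p$. Both routes yield the same criteria; the formal-group route is computationally lighter because the weight/divisibility lemma does the elimination uniformly and the exceptional constants $1,8,3,4$ drop out of a single power-series expansion, whereas your route trades this for structural facts about $\psi_p \bmod p$.

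Two points need attention. First, your identification $E_1(K)/pE_1(K)\cong \m_K/\m_K^2$ amounts to $pE_1(K)=E_2(K)$, which for $p=2$ does not follow from \cite[IV.6.4]{SI} (that needs $i>e/(p-1)$); the paper proves it separately (Proposition \ref{144}). It does follow from your own observation that $[p](T)=p\,g(T)$ with $g(T)=T+\cdots$ composition-invertible over $\OO_K$ in the unramified case, but you should make that step explicit. Second, and more seriously, the decisive arithmetic input --- that $p\nmid\gamma_i$ exactly for $i=p-1$, together with the values of $\gamma_{p-1}\bmod p$ and the $p=2$ quartic --- is only sketched, and the sketch is not quite correct as stated: the claim that the lowest nonvanishing $b_i$-degree in the one-parameter families is $(p-1)/i$ fails literally in the supersingular cases (for $y^2=x^3+B$ in characteristic $5$ one finds $\overline{\psi_5}$ is a constant multiple of $B^4$), and the assertion that the nodal cubic has $\overline{\psi_p}=(\textnormal{unit})\cdot x^{p(p-1)/2}$ itself requires proof (e.g.\ via the Tate parametrization, or via the congruence $\overline{\psi_p}=c\,H\,K(x)^p$ for ordinary curves). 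The facts you need are true --- for instance $\psi_5=5x^{12}+62Ax^{10}-\cdots$ for $y^2=x^3+Ax$, so the $b_4$-linear coefficient is a unit mod $5$ --- but establishing them uniformly in $p$, and pinning down the constants, is the real content of the theorem; it is exactly the part the paper settles by directly expanding $[p](T)$. As written, your text is a correct plan with that step outstanding rather than a complete proof.
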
 

The above theorem gives the following result for $K=\Q_p$. 

\begin{cor} \label{555}
Let $E/\Q_p$ be an elliptic curve given by a Weierstrass equation over $\Z_p$
$$
Y^2 + a_1 XY + a_3 Y = X^3 + a_2 X^2 + a_4 X + a_6,
$$
with $a_i \in p \Z_p$ for each $i$. 
One has $E_0(\Q_p) \cong_{\Z_p} \Z_p$, unless one is in one of the four special cases:
\begin{itemize}\itemsep=0pt
\item[(i)] $p=2$ and $a_1+a_3 \equiv 2 \pmod{4}$;
\item[(ii)] $p=3$ and $a_2 \equiv 6 \pmod{9}$;
\item[(iii)] $p=5$ and $a_4 \equiv 10 \pmod{25}$;
\item[(iv)] $p=7$ and $a_6 \equiv 14 \pmod{49}$.
\end{itemize}
In all special cases one has $E_0(\Q_p) \cong_{\Z_p} \Z_p \times \Z/p\Z$.
\end{cor}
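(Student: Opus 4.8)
The plan is to deduce Corollary \ref{555} directly from Theorem \ref{main} by specializing to $K=\Q_p$. First I would record that in this case $n=1$, the residue field $k$ is $\F_p$, the maximal ideal $\m_K$ is $p\Z_p$, and the norm $N_{k/\F_p}=N_{\F_p/\F_p}$ is the identity map on $\F_p$; thus Theorem \ref{main} already gives $E_0(\Q_p)\cong_{\Z_p}\Z_p$ outside the four listed cases, with the prescribed structure inside them. All that remains is to translate the four conditions, and the exceptional group structures, into the elementary congruences stated in the corollary.

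For $p=2$, I would evaluate the cubic $\overline{a_3/2}\,x^3+\overline{a_1/2}\,x-1$ at the only two elements of $\F_2$: at $x=0$ it is $-1=1\neq 0$, and at $x=1$ it is $\overline{a_1/2}+\overline{a_3/2}-1$; hence the solvability condition (i) of the theorem is equivalent to $\overline{a_1/2}+\overline{a_3/2}=1$ in $\F_2$, i.e.\ to $(a_1+a_3)/2$ being odd, i.e.\ to $a_1+a_3\equiv 2\pmod 4$. For the structure I would similarly plug $X=0$ and $X=1$ into the quartic $\overline{a_3/2}\,X^4+\overline{a_1/2}\,X^2-X$: the root $X=0$ always occurs, and $X=1$ occurs under exactly the condition that puts us in case (i), so in that case $2^b=2$, whence $b=1$ and $E_0(\Q_2)\cong_{\Z_2}\Z_2\times\Z/2\Z$.

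For $p=3,5,7$, since $N_{\F_p/\F_p}$ is trivial, I would rewrite the condition $N_{k/\F_p}(\overline{c\,a_\ast/p})=1$ simply as $\overline{a_\ast/p}=c^{-1}$ in $\F_p$, and then compute $c^{-1}$ in each case: $c=8\equiv-1$ for $p=3$ gives $\overline{a_2/3}=2$, i.e.\ $a_2\equiv 6\pmod 9$; $c=3$ for $p=5$ gives $\overline{a_4/5}=2$, i.e.\ $a_4\equiv 10\pmod{25}$; $c=4$ for $p=7$ gives $\overline{a_6/7}=2$, i.e.\ $a_6\equiv 14\pmod{49}$. In each of these three cases Theorem \ref{main} directly yields $E_0(\Q_p)\cong_{\Z_p}\Z_p\times\Z/p\Z$, which exhausts all cases and completes the proof.

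The argument involves no real obstacle — it is a pure specialization of Theorem \ref{main} — so the only point requiring a moment's care is the bookkeeping in the case $p=2$, where one must check that the nonzero element of $\F_2$ is a root of the quartic in Theorem \ref{main} exactly when it is a root of the cubic, so that the exponent $b$ there equals $1$ and not $0$.
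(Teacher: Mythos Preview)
Your proposal is correct and follows essentially the same approach as the paper: the paper's proof consists of the single line ``Follows from the discussion above (and Lemma \ref{183})'', i.e.\ it is exactly the specialization of Theorem \ref{main} to $K=\Q_p$ that you carry out. Your write-up is in fact more detailed than the paper's, spelling out the arithmetic checks (the inverses $8^{-1}=2$ in $\F_3$, $3^{-1}=2$ in $\F_5$, $4^{-1}=2$ in $\F_7$, and the evaluation at $x=0,1$ for $p=2$) that the paper leaves implicit.
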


Note that any curve with additive reduction can be written in the form of Theorem \ref{777} (Lemma \ref{999}).
The case $p>7$ of Theorem \ref{main} was also mentioned in \cite[Lemma 1]{SWID}. 

We obtain the following result when $K/\Q_p$ is ramified.

\begin{thm}
Assume that $K/\Q_p$ is of degree $n$ and that the ramification index is $e$. If $6e < p-1$, then one has $\EE_0(K) \cong_{\Z_p} \Z_p^n$. 
\end{thm}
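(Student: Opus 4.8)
The plan is to reduce the assertion to the vanishing of $E(K)[p]$ and then to prove that vanishing by passing to an extension of $K$ over which $E$ acquires semistable reduction. For the reduction step: since $6e<p-1$ forces $e<p-1$, Proposition~\ref{888} (equivalently, the formal logarithm) identifies $E_1(K)=\widehat{E}(\m_K)$ with $(\m_K,+)\cong_{\Z_p}\Z_p^n$, so $E_1(K)$ is torsion-free and sits inside $E_0(K)$ with finite cokernel $(k,+)$, a $p$-group. Consequently $E_0(K)$ is a finitely generated $\Z_p$-module of $\Z_p$-rank $n$, its torsion submodule $T$ injects into $(k,+)$ and is therefore an $\F_p$-vector space, and $E_0(K)\cong_{\Z_p}\Z_p^n\oplus T$ with $T\subseteq E(K)[p]$. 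Hence it suffices to prove $E(K)[p]=0$. I would argue by contradiction, assuming $0\neq P\in E(K)[p]$ (note that $6e<p-1$ forces $p\ge 11$) and distinguishing two cases according to the sign of $v_K(j)$, where $j=j(E)$.

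Suppose first $v_K(j)<0$. Then $E$ has potentially multiplicative reduction, and since $E$ is additive it is $K$-isomorphic to the quadratic twist $E_q^{(\psi)}$ of a split multiplicative Tate curve $E_q/K$ (with $v_K(q)=-v_K(j)>0$) by a \emph{ramified} quadratic character $\psi$. In the usual basis of $E_q[p]$ afforded by $\zeta_p$ and the class of $q^{1/p}$, the mod~$p$ Galois representation on $E[p]$ has the shape $\sigma\mapsto\left(\begin{smallmatrix}\overline{\chi}(\sigma)\psi(\sigma)&\ast\\ 0&\psi(\sigma)\end{smallmatrix}\right)$, with $\overline{\chi}$ the mod~$p$ cyclotomic character. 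Taking $G_K$-invariants: the lower coordinate must vanish because $\psi\neq 1$, and then the upper coordinate must vanish because $\overline{\chi}\psi\neq 1$ — indeed $\overline{\chi}$ has order at least $(p-1)/e>6$ on $G_K$, whereas $\psi$ has order $2$. Thus $E(K)[p]=0$, a contradiction.

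Suppose now $v_K(j)\ge 0$, so that $E$ has potentially good reduction. Over $K^{\mathrm{unr}}$, the curve $E$ acquires good reduction after a cyclic, totally tamely ramified extension $L/K^{\mathrm{unr}}$ of degree $m$; since $E$ is additive and $p\ge 5$ one has $m\in\{2,3,4,6\}$, and the standard analysis of potential good reduction provides a faithful action of $\Gamma:=\Gal(L/K^{\mathrm{unr}})$ on the good reduction $\widetilde{E}/\overline{k}$, identifying $\Gamma$ with the subgroup $\mu_m$ of $\Aut(\widetilde{E},O)$ acting by complex multiplications. Because $e(L/\Q_p)=me\le 6e<p-1$, the formal group $\widehat{E}(\m_L)$ is torsion-free, so the reduction $\widetilde{P}\in\widetilde{E}(\overline{k})$ of $P$ is nonzero and has order $p$. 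As $P\in E(K)\subseteq E(K^{\mathrm{unr}})$, the point $\widetilde{P}$ is fixed by $\Gamma$; writing a generator of $\Gamma$ as multiplication by $\zeta_m$, this says $[\zeta_m-1]\widetilde{P}=O$, i.e.\ $\widetilde{P}\in\ker[\zeta_m-1]$, a group scheme of order $\bigl|\Nm_{\Q(\zeta_m)/\Q}(\zeta_m-1)\bigr|\in\{4,3,2,1\}$ — and étale, this order being prime to $p$. Hence $\widetilde{P}=O$, a contradiction, and so $E(K)[p]=0$ as desired.

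I expect the potentially-good-reduction case to be the main obstacle, since it invokes the structure theory of potential good reduction (the embedding $\Gamma\hookrightarrow\Aut(\widetilde{E},O)$ and the bound $m\le 6$ valid for $p\ge 5$); the constant $6$ in the hypothesis is precisely what is needed so that $me<p-1$, which keeps the formal group over $L$ torsion-free and thus reduction injective on $p$-torsion. The potentially-multiplicative case, and the structural reduction at the start, are comparatively routine, modulo the foundational facts about $E_0(K)$, $E_1(K)$ and the formal group already established in the paper.
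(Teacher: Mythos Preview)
Your argument is correct and takes a genuinely different route from the paper. The paper's proof is a direct, elementary twist: one adjoins $\rho$ with $\rho^6=\pi$ and sets $\DD:\ Y^2+\alpha_1XY+\alpha_3Y=X^3+\alpha_2X^2+\alpha_4X+\alpha_6$ with $\alpha_i=a_i/\rho^i$, so that $(X,Y)\mapsto(X/\rho^2,Y/\rho^3)$ identifies $\EE_0(L)$ with $\DD_1(L)=\widehat{\DD}(\m_L)$; since $v_L(p)=6e<p-1$, the formal logarithm makes $\DD_1(L)$ torsion-free, hence $\EE_0(K)\hookrightarrow\DD_1(L)$ is torsion-free and the exact sequence $0\to\m_K\to\EE_0(K)\to k^+\to 0$ forces $\EE_0(K)\cong\Z_p^n$. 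By contrast, you prove the stronger statement $E(K)[p]=0$ via the dichotomy potentially multiplicative versus potentially good, invoking Tate's uniformization in the first case and the Serre--Tate embedding of the tame inertia image into $\Aut(\widetilde{E})$ in the second. Both explanations make the constant $6$ transparent: in the paper it is the maximal Weierstrass weight, ensuring $\alpha_i\in\OO_L$; in your approach it is the bound $m\le 6$ on $\#\Aut(\widetilde{E})$ for $p\ge 5$, ensuring $me<p-1$. The paper's method is shorter, self-contained (no Tate curve, no Serre--Tate), and simultaneously establishes the isomorphism $\Psi:\EE_0(K)\to\widehat{\EE}(\OO_K)$ that drives the rest of the paper. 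Your method imports heavier structure theory but yields the stronger conclusion $E(K)[p]=0$ and situates the result within the general picture of potential semistability. One cosmetic remark: for $m=2$ the quantity $\lvert\Nm_{\Q(\zeta_2)/\Q}(\zeta_2-1)\rvert$ equals $2$, not $4$; the kernel of $[\zeta_2-1]=[-2]$ has order $4$ because $\deg[-2]=4$, so your list $\{4,3,2,1\}$ is correct even though the norm formula as written only matches it for $m\ge 3$.
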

We will briefly discuss some of the exceptional cases for ramified extensions, but we do not obtain a completely satisfying answer.

We will say a few words about the idea of the proofs. Let $K/\Q_p$ be a finite extension. It is a standard fact from the theory of elliptic curves over local fields \cite[VII.6.3]{SI} that $E_0(K)$ admits a canonical filtration
$$
E_0(K) \supset E_1(K) \supset E_2(K) \supset E_3(K) \supset \ldots,
$$
where for each $i \geq 1$ the quotient $E_i(K)/E_{i+1}(K)$ is isomorphic to $k$. The quotient $E_0(K)/E_{1}(K)$ is also isomorphic to $k$ by the fact that $E$ has additive reduction. The groups $E_i(K)$ for $i \geq 1$ can be seen as formal groups, and since our curve has additive reduction, $E_0(K)$ can also be seen as a formal group (Proposition \ref{tocomputewith}). For large enough $j$, the theory of formal groups gives an isomorphism $j : E_j(K) \stackrel{\sim}{\rightarrow} \m_K^j$. The idea is then to explicitly use the exact sequence $0 \to E_j(K) \to E_{j-1}(K) \to k \to 0$ to compute the structure of $E_{j-1}(K)$. We keep doing this, until we (hopefully) get the structure of $E_0(K)$.  In the case that $K$ is unramfied over $\Q_p$, the computations become easier, since one already has $E_1(K) \cong \m_K$ (Proposition \ref{144}) and we have to study only one exact sequence to obtain the structure of $E_0(K)$.

This paper is a generalization of \cite{PAN} and hence contains some overlap.

\section{Preliminaries}
\label{weier}

\subsection{Weierstrass curves}

All proofs of facts recalled in this section can be found in \cite[IV and VII]{SI}.

Let $p$ be a prime. Let $K$ be a finite field extension of $\Q_p$ of degree $n$ and ramification degree $e$. Let $v_K : K \rightarrow \Z \cup \{ \infty \}$ be its normalized valuation. Let $\OO_K$ be the ring of integers, $\mathfrak{m}_K$ its maximal ideal and $k$ its residue field. By a \defi{Weierstrass curve} over $\OO_K$ we mean a projective curve $\EE \subset \p^2_{\OO_K}$ defined by a Weierstrass equation
\begin{equation}
\label{Weiereq}
Y^2 + a_1XY + a_3Y = X^3+ a_2X^2 + a_4X + a_6,
\end{equation}
such that the generic fiber $\EE_K$ is an elliptic curve with $(0:1:0)$ as the origin. The coefficients $a_i \in \OO_K$ are uniquely determined by $\EE$.

The set $\p^2(K)$ has the quotient topology from $K^3$ where $K$ has the $p$-adic topology. This induces a topology on $\EE(K)$ and makes $\EE(K)$ into a topological group. We call this topology the standard topology. 

We will say that a Weierstrass curve $\EE/\OO_K$ has \defi{good reduction} when the special fiber $\wt{\EE}=\EE_{k}$ is smooth, \defi{multiplicative reduction} when $\wt{\EE}$ is nodal (i.e. there are two distinct tangent directions to the singular point), and \defi{additive reduction} when $\wt{\EE}$ is cuspidal (i.e. one tangent direction to the singular point).

We have $\EE(K) = \EE(\OO_K)$ since $\EE$ is projective. Therefore, we have a reduction map $\EE(K) \rightarrow \wt{\EE}(k)$ given by restricting an element of $\EE(\OO_K)$ to the special fiber. Let $\wt{\EE}_{\textnormal{sm}}(k)$ be the complement of the singular points in the special fiber $\wt{\EE}(k)$, which is a group.  We denote the preimage of $\wt{\EE}_{\textnormal{sm}}(k)$ under the reduction map by $\EE_0(K)$. The standard topology makes $\EE_0(K)$ into a topological group. By $\EE_1(K) \subset \EE_0(K)$ we denote the \defi{kernel of reduction}, i.e. the points that map to the identity $\infty$ of $\wt{\EE}(k)$. A more explicit definition of $\EE_1(K)$ is
\begin{equation}
\label{defE1}
\EE_1(K) = \left\{ (x,y) \in \EE(K) : v_K(x) \leq -2, v_K(y) \leq -3 \right\} \cup \{ \infty \}.
\end{equation}
More generally, one defines subgroups $\EE_n(K) \subset \EE_0(K)$ as follows \cite[ex.~7.4]{SI}:
$$
\EE_n(K) = \left\{ (x,y) \in \EE(K) : v_K(x) \leq -2n, v_K(y) \leq -3n \right\} \cup \{ \infty \}.
$$
We thus have an infinite filtration on the subgroup $\EE_1(K)$:
\begin{equation}
\label{filtE}
\EE_1(K) \supset \EE_2(K) \supset \EE_3(K) \supset \cdots
\end{equation}
This gives us a filtration of open neighborhoods around $\infty$ of $\EE_0(K)$. 

We have an exact sequence \cite[VII.2.1]{SI}:
\begin{equation}
0 \rightarrow \EE_1(K) \rightarrow \EE_0(K) \rightarrow \wt{\EE}_{\textnormal{sm}}(k) \rightarrow 0.
\end{equation}

If $\wt{\EE}$ is smooth, $\wt{\EE}=\wt{\EE}_{\textnormal{sm}}(k)$ is an elliptic curve.
If $\wt{\EE}$ is cuspidal, one has $\wt{\EE}_{\textnormal{sm}}(k) \cong k^*$ or  $\wt{\EE}_{\textnormal{sm}}(k) \cong \{c \in l^*: N_{l/k}(c)=1\}$ where $l/k$ is of degree $2$. We call this multiplicative reduction. If $\EE_{k}$ is nodal, one has $\wt{\EE}_{\textnormal{sm}}(k) \cong k^+$. We call this additive reduction. Assume that $\wt{\EE}$ has additive reduction. If $(0,0)$ is the node, and the tangent line is given by $Y=0$, that is, the equation for the reduction is of the form $Y^2=X^3$, then one has an isomorphism $\wt{\EE}_{\textnormal{sm}}(k) \to k^+$ is given by $(x,y) \mapsto -x/y$. See \cite[III.2.5]{SI}. In the case that the curve is minimal and of additive reduction, the quotient $E(K)/\EE_1(K)$ has order at most $4$ \cite[VII.6.1]{SI}.

We denote the \defi{formal group} corresponding to $E$ by $\widehat{\EE}$ \cite[IV.1--2]{SI}. This is a one-dimensional formal group over $\OO_K$. Giving the data of this formal group is the same as giving a power series $F = F_{\widehat{\EE}}$ in $\OO_K[[X,Y]]$, called the \defi{formal group law}. It satisfies
$$
F(X,Y) = X + Y + (\textnormal{terms of degree $\geq 2$})
$$
and
$$F(F(X,Y),Z)) = F(X,F(Y,Z)).
$$
For $\EE$ as in (\ref{Weiereq}), the first few terms of $F$ are given by:
\begin{align*}
& F(X,Y) \,= \\
& X+Y \, - \, a_1XY \, - \, a_2(X^2Y+XY^2)  \,-\,2a_3(X^3Y + XY^3) + (a_1a_2 - 3a_3)X^2Y^2 \,-\\
& (2a_1a_3+2a_4)(X^4Y+XY^4)-(a_1a_3-a_2^2+4a_4)(X^3Y^2+X^2Y^3) \,+ \ldots.
\end{align*}
In fact, $\widehat{\EE}$ is a $\Z_p$-module (see \cite[ex.~4.3]{SI}).

Treating the Weierstrass coefficients $a_i$ as unknowns, we may consider $F$ as an element of $\Z[a_1,a_2,a_3,a_4,a_6][[X,Y]]$, called the \defi{generic formal group law}. If we make $\Z[a_1,a_2,a_3,a_4,a_6]$ into a weighted ring with weight function wt, such that $\wht(a_i)=i$ for each $i$, then the coefficients of $F$ in degree $n$ are homogeneous of weight $n-1$ \cite[IV.1.1]{SI}. For each $n \in \Z_{\geq 2}$, we define power series $[n]$ in $\OO_K[[T]]$ by $[2](T) = F(T,T)$ and $[n](T) = F([n-1](T),T)$ for $n\geq 3$. Here also, we may consider each $[n]$ either as a power series in $\OO_K[[T]]$ or as a power series in $\Z[a_1,a_2,a_3,a_4,a_6][[T]]$ called the \defi{generic multiplication by $n$ law}. We have:
\begin{lem}
\label{mult_by_p}
Let $[p] = \sum_i b_i T^i \in \Z[a_1,a_2,a_3,a_4,a_6][[T]]$ be the generic formal multiplication by $p$ law. Then:
\begin{enumerate}\itemsep=0pt
\item \label{p_deelt_bn} $p \mid b_i$ for all $i$ not divisible by $p$;
\item \label{gewicht_bn} $\textnormal{wt}(b_i) = i-1$, considering $\Z[a_1,a_2,a_3,a_4,a_6]$ as a weighted ring as above.\end{enumerate}
\end{lem}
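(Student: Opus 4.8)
The plan is to treat the two assertions separately: (2) is a weight bookkeeping that propagates from the generic formal group law, while (1) comes from the behaviour of the invariant differential under the isogeny $[p]$. For (2), I would show by induction on $n \ge 2$ that every coefficient of $T^i$ in the generic multiplication-by-$n$ law $[n](T)$ is homogeneous of weight $i-1$; the case $n=p$ is then the claim. Recall from \cite[IV.1.1]{SI} (as noted above) that the coefficient of $X^iY^j$ in the generic $F$ is homogeneous of weight $i+j-1$; equivalently, assigning also $\wht(T) = -1$, the series $F$ is weighted-homogeneous of weight $-1$. Since $[2](T) = F(T,T)$ substitutes the weight-$(-1)$ element $T$ for both $X$ and $Y$, it is again weighted-homogeneous of weight $-1$, i.e. its $T^i$-coefficient has weight $i-1$. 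For the inductive step, $[n](T) = F([n-1](T), T)$ substitutes the weighted-homogeneous series $[n-1](T)$ of weight $-1$ (inductive hypothesis) for $X$ and the weight-$(-1)$ series $T$ for $Y$ into the weight-$(-1)$-homogeneous $F$, so the output is again weighted-homogeneous of weight $-1$; concretely, writing $F = \sum_{i,j} c_{ij} X^iY^j$ with $\wht(c_{ij}) = i+j-1$, the $T^m$-coefficient of $c_{ij}\,[n-1](T)^iT^j$ has weight $(i+j-1) + ((m-j)-i) = m-1$, and summation preserves this.

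For (1) I would use the invariant differential of the generic formal group $\widehat{\EE}$, which has the form $\omega = P(T)\,dT$ with $P(T) \in \Z[a_1,\dots,a_6][[T]]$ and $P(0)=1$, so that $P$ is a unit power series. The key input is the standard transformation formula $[p]^*\omega = p\,\omega$ \cite[IV.2]{SI} (a consequence of $[p]'(0)=p$ together with the fact that a homomorphism of formal groups pulls the invariant differential back by its derivative at the origin), which reads
\begin{equation*}
P\bigl([p](T)\bigr)\cdot [p]'(T) \;=\; p\,P(T)
\end{equation*}
as an identity in $\Z[a_1,\dots,a_6][[T]]$. Since $[p](T)$ has zero constant term, $P([p](T))$ also has constant term $1$ and is therefore a unit, whence $[p]'(T) = p\,P(T)\,P([p](T))^{-1} \in p\,\Z[a_1,\dots,a_6][[T]]$. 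Comparing coefficients of $T^{i-1}$ on both sides gives $p \mid i\,b_i$ in $\Z[a_1,\dots,a_6]$; as this ring is an integral domain in which $p$ is prime (its reduction modulo $p$ is the domain $\F_p[a_1,\dots,a_6]$) and $p \nmid i$ by hypothesis, we conclude $p \mid b_i$, as desired.

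None of these steps is deep. The only points that warrant care are checking that the cited results of \cite[IV]{SI} — the homogeneity of $F$, the integrality of $\omega$, and the identity $[p]^*\omega = p\omega$ — are applied over the universal coefficient ring $\Z[a_1,\dots,a_6]$ rather than merely over a field or a discrete valuation ring, and the final deduction of $p \mid b_i$ from $p \mid i\,b_i$, which is exactly where one uses that the coefficients live in a polynomial ring over $\Z$. I expect the weight computation in (2), if written out in full generality, to be the most tedious part of the argument.
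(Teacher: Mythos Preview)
Your proposal is correct. The paper's own proof simply cites \cite[IV.4.4]{SI} for part~(\ref{p_deelt_bn}) and \cite[IV.1.1]{SI} for part~(\ref{gewicht_bn}); the arguments you give---the invariant-differential identity $[p]^*\omega = p\,\omega$ leading to $p\mid [p]'(T)$, and the weight induction through $[n](T)=F([n-1](T),T)$---are exactly the content of those references, so you have essentially written out what the paper leaves to Silverman.
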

\begin{proof}
(\ref{p_deelt_bn}) is proved in \cite[IV.4.4]{SI}. \\
(\ref{gewicht_bn}) follows from \cite[IV.1.1]{SI} or what was said above.
\end{proof}
The series $F(u,v)$ converges to an element of $\mm_K$ for all $u,v \in \mm_K$. To $\EE$ one associates the group $\hat\EE(\mathfrak{m}_K)$, the $\mathfrak{m}_K$-valued points of $\widehat{\EE}$, which as a set is just $\mathfrak{m}_K$, and whose group operation $+$ is given by $u+v = F(u,v)$ for all $u,v \in \hat\EE(\mathfrak{m}_K)$. The identity element of $\hat\EE(\mathfrak{m}_K)$  is $0 \in \mm_K$. If $i \geq 1$ is an integer, then by $\hat\EE(\mathfrak{m}^i_K)$ we denote the subset of $\hat\EE(\mathfrak{m}_K)$ corresponding to the subset $\mm_K^i \subset \mm_K$, where $\mm_K^i$ is the $i$th power of the ideal $\mm_K$ of $\OO_K$. The subsets $\hat\EE(\mathfrak{m}^i_K)$ are subgroups of $\hat\EE(\mathfrak{m}_K)$, and we have an infinite filtration of $\widehat{\EE}(\mm_K)$:
\begin{equation}
\label{filtEhat}
\widehat{\EE}(\mm_K) \supset \widehat{\EE}(\mm_K^2) \supset \widehat{\EE}(\mm_K^3) \supset \cdots.
\end{equation}
One has $\widehat{\EE}(\mm_K^i)/\widehat{\EE}(\mm_K^{i+1}) \cong \m_K^i/\m_K^{i+1} \cong k^+$. For $i>e/(p-1)$ one has $\widehat{\EE}(\mm_K^i)  \cong_{\Z_p} \m_K^i $ by a `logarithm' map, and the following diagram commutes (\cite[IV.6.4]{SI}):
\begin{align*}
\xymatrix{
\widehat{\EE}(\mm_K^{i+1} ) \ar@{^{(}->}[r] \ar[d]^{\sim} & \widehat{\EE}(\mm_K^i) \ar[d]^{\sim}  \\
\m_K^{i+1} \ar@{^{(}->}[r] & \mm_K^{i}.
}
\end{align*}

We make $\widehat{\EE}(\mm_K) $ into a topological group by using the above filtration as a fundamental system of neighborhoods around $0$. This means that a subset $U$ of $\widehat{\EE}(\mm_K)$ is open if and only if for all $x \in U$ there is an $m \in \Z_{\geq 1}$ such that $x+\widehat{\EE}(\mm_K^m) \subseteq U$. This is the same as the topology coming from the identification $\widehat{\EE}(\mm_K)=\m_K$ of sets and then using the $p$-adic topology from $K$ on $\m_K$. We call this the standard topology.

\begin{prop}\label{fund_isom}
The map
\begin{align*}
\psi_K: \EE_1(K) & \stackrel{\sim}{\rightarrow} \widehat{\EE}(\mathfrak{m}_K) \\
(x,y) & \mapsto -x/y \\
0 & \mapsto 0
\end{align*}
is an isomorphism of topological groups. Moreover, $\psi_K$ respects the filtrations (\ref{filtE}) and (\ref{filtEhat}), i.e. it identifies the subgroups $\EE_i(K)$ defined with $\hat\EE(\mathfrak{m}^i_K)$ for $i \in \Z_{\geq 1}$.
\end{prop}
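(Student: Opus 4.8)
\emph{Proof plan.}
The plan is to recover both groups from the formal parametrization of the Weierstrass curve near its origin, following \cite[IV.1 and VII.2]{SI}. Introduce the coordinates $z = -X/Y$, $w = -1/Y$, so that the origin $\infty = (0:1:0)$ becomes $(z,w) = (0,0)$ and an affine point $(x,y)$ becomes $(z,w) = (-x/y,\,-1/y)$, with inverse substitution $X = z/w$, $Y = -1/w$. Plugging this into (\ref{Weiereq}) and clearing denominators gives the relation
\begin{equation*}
w \;=\; z^3 + a_1 zw + a_2 z^2 w + a_3 w^2 + a_4 z w^2 + a_6 w^3 \;=:\; f(z,w).
\end{equation*}
For fixed $z \in \m_K$ the map $w \mapsto f(z,w)$ sends $\m_K$ into $\m_K$, and since $\partial f/\partial w = a_1 z + a_2 z^2 + 2a_3 w + 2a_4 zw + 3a_6 w^2 \in \m_K$ for $z,w \in \m_K$, it contracts $v_K$-distances; as $\m_K$ is complete it has a unique fixed point $w(z) \in \m_K$. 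Successive approximation from $w=0$ shows $w(z) = z^3 + a_1 z^4 + \cdots$ lies in $z^3\bigl(1 + z\,\OO_K[[z]]\bigr)$ --- in fact $w(z)$ is the specialization of the power series of \cite[IV.1]{SI} --- so $v_K(w(z)) = 3\,v_K(z)$.

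First I would produce the inverse of $\psi_K$. Set $\theta\colon \m_K \to \EE_1(K)$, $\theta(0) = \infty$, $\theta(z) = \bigl(z/w(z),\,-1/w(z)\bigr)$ for $z \neq 0$. By the valuation bound, $v_K\bigl(z/w(z)\bigr) = -2\,v_K(z)$ and $v_K\bigl(-1/w(z)\bigr) = -3\,v_K(z)$, so $\theta(z) \in \EE_1(K)$, and in fact $\theta(z) \in \EE_i(K)$ if and only if $v_K(z) \ge i$. Conversely, for $(x,y) \in \EE_1(K)\setminus\{\infty\}$ a comparison of the valuations of the two sides of (\ref{Weiereq}) forces $3\,v_K(x) = 2\,v_K(y)$; combined with $v_K(x) \le -2$ this gives $v_K(x) = -2s$, $v_K(y) = -3s$ for some integer $s \ge 1$, so $z := -x/y \in \m_K^s$ and $w := -1/y \in \m_K$ satisfy $w = f(z,w)$, whence $w = w(z)$ by uniqueness and $(x,y) = \theta(z)$. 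Thus $\psi_K$ and $\theta$ are mutually inverse, and the equivalence $\theta(z) \in \EE_i(K) \Leftrightarrow z \in \m_K^i$ says exactly that $\psi_K$ maps $\EE_i(K)$ onto $\widehat{\EE}(\m_K^i)$ for each $i \ge 1$. Since the topologies on $\EE_1(K)$ and on $\widehat{\EE}(\m_K)$ both take these filtrations as fundamental systems of neighborhoods of the identity, $\psi_K$ is automatically a homeomorphism.

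It remains to show $\psi_K$ is a homomorphism, and here I would use the construction of the formal group law $F = F_{\widehat{\EE}}$ itself. By definition, in the $(z,w)$-coordinates the chord-and-tangent addition on $\EE$ writes the parameter of $P_1 + P_2$ as a power series $F(z_1,z_2) \in \Z[a_1,\dots,a_6][[z_1,z_2]]$ in the parameters $z_1, z_2$ of $P_1, P_2$ (see \cite[IV.1]{SI}). For $P_1 = \theta(z_1)$, $P_2 = \theta(z_2)$ with $z_1, z_2 \in \m_K$, every quantity entering the addition algorithm --- the slope of the secant (or tangent), the coordinates of the third intersection point, and finally its parameter --- is such a power series in $z_1, z_2$ with coefficients in $\OO_K$, hence converges $p$-adically at $(z_1,z_2)$; so the formal identities become numerical ones, giving $P_1 + P_2 = \theta\bigl(F(z_1,z_2)\bigr)$ and therefore $\psi_K(P_1 + P_2) = F(z_1,z_2) = F(\psi_K P_1, \psi_K P_2)$, which is $\psi_K P_1 + \psi_K P_2$ by the definition of the group operation on $\widehat{\EE}(\m_K)$. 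With $\psi_K(\infty) = 0$ this finishes the argument.

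I expect the genuinely delicate point to be this last step: one must be sure that the algebraic addition formulas on $\EE$, once restricted to $\EE_1(K)$ and rewritten in the parameter $z$, are obtained by substitution into precisely the same series $F$ that defines $\widehat{\EE}(\m_K)$, rather than into some series merely agreeing with $F$ to low order. This is exactly the content of \cite[IV.1 and VII.2.1--2.2]{SI}; the uniform estimate $v_K(w(z)) = 3\,v_K(z)$ is what guarantees convergence of all the series involved and makes the two filtrations match on the nose.
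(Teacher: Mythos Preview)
Your proof is correct and is essentially a detailed unpacking of the argument in \cite[VII.2.2]{SI}, which is exactly what the paper cites for its proof. The only difference is that the paper simply refers the reader to Silverman and remarks that the maps are homeomorphisms, whereas you have written out the contraction-mapping construction of $w(z)$, the filtration count $v_K(w(z))=3v_K(z)$, and the identification of the formal group law with the addition in $(z,w)$-coordinates explicitly.
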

\begin{proof}
See the proof of \cite[VII.2.2]{SI}: the maps are homeomorphisms.
\end{proof}
Given a finite field extension $L \supseteq K$, we have a natural commutative diagram
\[
\xymatrix{
\EE_1(K) \ar[r]^{\psi_K}   \ar@{^(->}[d]^{\textnormal{incl}} & \widehat{\EE}(\mathfrak{m}_K) \ar@{^(->}[d]^{\textnormal{incl}}   \\\
\EE_1(L) \ar[r]^{\psi_L}                            & \widehat{\EE_{\OO_L}}(\mathfrak{m}_L)       
}
\]
Here $\widehat{\EE_{\OO_L}}(\mathfrak{m}_L)$ is the set of $\mathfrak{m}_L$-valued points of the formal group of $\EE_{\OO_L}$, the base-change of $\EE$ to $\Spec(\OO_L)$.

Let $G$ be a finitely generated $\Z_p$-module. We make $G$ into a topological group by using the filtration $\{p^i G: i \in \Z_{\geq 0}\}$ of neighborhoods around $0$. We call this the $\Z_p$-topology.


\begin{prop} \label{888}
The group $\EE_0(K)$ is a finitely generated $\Z_p$-module. Furthermore, the $\Z_p$-module topology and the standard topology on $\EE_0(K)$ coincide.
\end{prop}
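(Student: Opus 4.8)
\noindent The plan is to build up $\EE_0(K)$ from its canonical filtration $\EE_0(K)\supset\EE_1(K)\supset\EE_2(K)\supset\cdots$, exploiting that the subgroup $\EE_1(K)\cong\widehat{\EE}(\mm_K)$ is already well understood. The first point is that $\EE_0(K)$ is a profinite, pro-$p$ group whose standard topology is its profinite topology. Indeed, since $\EE$ is projective, $\EE(K)=\EE(\OO_K)$ is a closed subset of the compact space $\p^2(\OO_K)$, hence compact; and $\EE_0(K)$, being the preimage under reduction of the finite discrete set $\wt{\EE}_{\sm}(k)$, is open and closed in $\EE(K)$, hence itself compact and totally disconnected. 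Because $\EE$ has additive reduction we have $\wt{\EE}_{\sm}(k)\cong k^+$, and each $\EE_i(K)/\EE_{i+1}(K)\cong k^+$ as well, so every quotient $\EE_0(K)/\EE_m(K)$ is a finite abelian $p$-group. The $\EE_m(K)$ form a fundamental system of neighbourhoods of $0$ with $\bigcap_m\EE_m(K)=\{0\}$ (because $\EE_m(K)\cong\widehat{\EE}(\mm_K^m)$ corresponds to $\mm_K^m$ and $\bigcap_m\mm_K^m=0$), so a routine compactness argument identifies $\EE_0(K)$ topologically with $\varprojlim_m\EE_0(K)/\EE_m(K)$. An inverse limit of finite abelian $p$-groups carries a unique continuous $\Z_p$-module structure; this equips $\EE_0(K)$ with a $\Z_p$-module structure, and its profinite topology is exactly the standard topology.

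Next I would prove finite generation and read off the shape. Fix $m>e/(p-1)$. By Proposition~\ref{fund_isom} and the logarithm isomorphism recalled above, $\EE_m(K)\cong\widehat{\EE}(\mm_K^m)\cong_{\Z_p}\mm_K^m$; since $\mm_K^m$ is free of rank one over $\OO_K$ and $\OO_K$ is free of rank $n$ over $\Z_p$, we get $\EE_m(K)\cong_{\Z_p}\Z_p^n$. This $\EE_m(K)$ is an open $\Z_p$-submodule of $\EE_0(K)$ (it is a closed $\Z$-submodule, hence stable under the $\Z_p$-action by continuity) and $\EE_0(K)/\EE_m(K)$ is finite. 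Lifting a generating set of this finite quotient and adjoining the $n$ generators of $\EE_m(K)$ gives a finite $\Z_p$-generating set of $\EE_0(K)$, so $\EE_0(K)$ is a finitely generated $\Z_p$-module; by the structure theorem $\EE_0(K)\cong_{\Z_p}\Z_p^n\times T$ with $T$ a finite $p$-group (the free rank equals $n$ because it agrees with that of the finite-index submodule $\EE_m(K)$).

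It remains to show the $\Z_p$-module topology on $\EE_0(K)$, which has $\{p^i\EE_0(K)\}_{i\geq 0}$ as a neighbourhood basis of $0$, coincides with the standard topology. On the one hand, $\EE_0(K)/\EE_m(K)$ is a finite $p$-group, hence killed by some $p^{c_m}$, so $p^{c_m}\EE_0(K)\subseteq\EE_m(K)$; thus every $\EE_m(K)$ is a neighbourhood of $0$ for the $\Z_p$-topology. On the other hand, from $\EE_0(K)\cong_{\Z_p}\Z_p^n\times T$ the subgroup $p^i\EE_0(K)$ has finite index, and it is also closed, being the image of the compact group $\EE_0(K)$ under multiplication by $p^i$; a closed subgroup of finite index is open, so $p^i\EE_0(K)$ contains some $\EE_m(K)$. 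Hence the two topologies coincide.

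The single place where the hypotheses are genuinely used is the appeal to additive reduction: it is precisely what makes $\wt{\EE}_{\sm}(k)\cong k^+$ a $p$-group, so that $\EE_0(K)$ is pro-$p$ and hence a $\Z_p$-module at all; for good or multiplicative reduction, $\wt{\EE}_{\sm}(k)$ contributes prime-to-$p$ torsion and the statement fails. Everything else is routine manipulation of the filtration together with the structure theorem for finitely generated $\Z_p$-modules and standard facts about profinite groups, so I expect this to be the only real point requiring care.
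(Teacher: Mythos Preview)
Your proof is correct and follows the same basic strategy as the paper: use the filtration $\{\EE_m(K)\}$, identify $\EE_m(K)\cong\mm_K^m\cong\Z_p^n$ for $m$ large via the formal logarithm, and climb back up through the finite successive quotients. The paper's write-up is much terser and leaves the construction of the $\Z_p$-module structure implicit, whereas you spell it out carefully via the pro-$p$ inverse limit; this is an improvement in rigor rather than a different idea.

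The one genuine difference is in the argument that the two topologies agree. The paper simply observes that for $i$ large the logarithm gives $p\EE_i(K)\cong p\,\mm_K^i=\mm_K^{i+e}\cong\EE_{i+e}(K)$, so the filtrations $\{p^j\EE_0(K)\}_j$ and $\{\EE_m(K)\}_m$ are visibly cofinal. You instead argue abstractly: $p^i\EE_0(K)$ is the continuous image of a compact set, hence closed, and has finite index, hence is open in the standard topology and so contains some $\EE_m(K)$. Both are fine; the paper's route is a one-line explicit computation, yours is a soft topological argument that transports to any similar setting.

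One small correction to your closing commentary: it is not quite true that the statement \emph{fails} for good or multiplicative reduction, only that it \emph{can} fail. What goes wrong in general is that $\wt{\EE}_{\sm}(k)$ need not be a $p$-group, in which case $\EE_0(K)$ picks up prime-to-$p$ torsion and is merely a $\hat\Z$-module, not a $\Z_p$-module. Your identification of additive reduction as the hypothesis that makes the statement go through is correct; the paper leaves this implicit in the surrounding context rather than stating it in the proposition itself.
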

\begin{proof} We use Proposition \ref{fund_isom}. Note that $\EE_0(K)/\EE_1(K)$ is finite. One has $\EE_i(K)/\EE_{i+1}(K) \cong k^+$ for $i \geq 1$, and for large enough $i$ one has $\EE_i(K) \cong_{\Z_p} \m_K^i$. Note that $\m_K^i$ is finitely generated and hence $\EE_i(K)$ is finitely generated. This shows that $\EE_0(K)$ is finitely generated.

For large enough $i$ one has $p \EE_i(K) \cong p \m_K^i \cong \m_K^{i+e} \cong \EE_{i+e}(K)$. Hence the filtrations of neighborhoods give the same topology. 
\end{proof}

\subsection{Commutative algebra}

\begin{lem} \label{128} Let $a, n \in \Z_{\geq 0}$. Let 
\begin{align*}
0 \to \Z_p^a \overset{i}{\to} G \overset{\pi}{\to} H \to 0
\end{align*}
be an exact sequence of $\Z_p$-modules where $H$ is finitely generated and $p^n$-torsion. Let $\tau: \Z_p^a \to \Z_p^a/p^n \Z_p^a$ be the natural quotient map.
Define 
\begin{align*}
g: H \to& \Z_p^a/p^n \Z_p^a \\
\pi(x) \mapsto& \tau ( p^nx).
\end{align*}
Then $g$ is a well-defined morphism of $\Z_p$-modules and one has $G \cong \Z_p^a \oplus \mathrm{ker}(g)$. Furthermore, if $\mathrm{ker}(g)=0$, one can identify $G$ with $1/p^n(\tau^{-1} \mathrm{im}(g)) \supseteq \Z_p^a$ inside $\Q_p^a$.  
\end{lem}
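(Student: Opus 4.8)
The plan is to treat the three assertions in turn, reducing each to the structure theory of finitely generated modules over the discrete valuation ring $\Z_p$ together with one explicit computation of $\ker g$. Throughout I identify $\Z_p^a$ with its image $i(\Z_p^a)\subseteq G$. For well-definedness of $g$: since $H$ is $p^n$-torsion, $\pi(p^n x)=p^n\pi(x)=0$ for every $x\in G$, so $p^n x\in\ker\pi=\Z_p^a$ and $\tau(p^n x)$ is defined; and if $\pi(x)=\pi(x')$ then $x-x'\in\Z_p^a$, hence $p^n(x-x')\in p^n\Z_p^a=\ker\tau$, so $\tau(p^n x)=\tau(p^n x')$. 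Linearity of $g$ is immediate.

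For the decomposition $G\cong\Z_p^a\oplus\ker g$: being an extension of the finitely generated module $H$ by $\Z_p^a$, the module $G$ is finitely generated over $\Z_p$; tensoring the exact sequence with $\Q_p$ and using $H\otimes_{\Z_p}\Q_p=0$ shows that the free rank of $G$ equals $a$. The key observation is that the torsion submodule $T$ of $G$ is annihilated by $p^n$: indeed $p^nG\subseteq\Z_p^a$ because $H$ is $p^n$-torsion, and $\Z_p^a$ is torsion-free, so $p^nT\subseteq\Z_p^a\cap T=0$; in particular $T=G[p^n]$, and the structure theorem gives $G\cong\Z_p^a\oplus T$. It then remains to identify $\ker g$ with $T$. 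For $x\in G$ one has $g(\pi(x))=0$ precisely when $p^n x\in p^n\Z_p^a$, i.e.\ when $x\in\Z_p^a+G[p^n]$ (write $p^n x=p^n v$ with $v\in\Z_p^a$ and note $x-v\in G[p^n]$); applying $\pi$ and using that $\pi$ is injective on $G[p^n]$ (as $\ker\pi$ is torsion-free) yields $\ker g=\pi(G[p^n])\cong G[p^n]=T$, and hence $G\cong\Z_p^a\oplus\ker g$.

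For the last assertion, suppose $\ker g=0$; then $T=0$, so $G$ is torsion-free, $G\hookrightarrow G\otimes_{\Z_p}\Q_p$, and since $i$ becomes an isomorphism after $\otimes\Q_p$ we may identify $G\otimes\Q_p$ with $\Q_p^a$ in such a way that $\Z_p^a\subseteq G$ becomes the standard lattice. Since $p^nG\subseteq\Z_p^a$, this gives $\Z_p^a\subseteq G\subseteq\tfrac{1}{p^n}\Z_p^a$ inside $\Q_p^a$. Finally, unwinding the definition of $g$ shows $\tau^{-1}(\mathrm{im}\,g)=p^n(G+\Z_p^a)=p^nG$, so $G=\tfrac{1}{p^n}\tau^{-1}(\mathrm{im}\,g)$, which contains $\tfrac{1}{p^n}\ker\tau=\Z_p^a$.

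The one point requiring care is the claim that the torsion submodule of $G$ is killed by $p^n$; this is exactly what promotes the a priori unstructured group $\ker g$ to (an isomorphic copy of) the entire torsion part of $G$, and once it is in place the remainder is routine bookkeeping with the classification of finitely generated $\Z_p$-modules and with the fact that $\Z_p^a$ sits inside $G$ with finite, $p^n$-killed cokernel.
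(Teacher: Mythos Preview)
Your proof is correct and follows essentially the same route as the paper: show $p^nG\subseteq\Z_p^a$, factor the map $x\mapsto\tau(p^nx)$ through $\pi$ to obtain $g$, identify $\ker g$ with $G[p^n]$ via the equality $\{x:p^nx\in p^n\Z_p^a\}=\Z_p^a+G[p^n]$, and for the final statement verify $\tau^{-1}(\mathrm{im}\,g)=p^nG$. If anything, your write-up is more careful than the paper's --- you explicitly justify finite generation of $G$, the free rank being $a$, and the fact that all torsion is killed by $p^n$, whereas the paper simply invokes the structure theorem at that point.
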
 
\begin{proof}
Consider the morphism $h: G \to \Z_p^a/p^n \Z_p^a$, $x \mapsto \tau(p^nx)$. Note that $h|_H=0$, so this morphism induces the map $g$ with $h= g \circ \pi$. By the structure theorem for finitely generated $\Z_p$-modules we have $G \cong \Z_p^a \oplus G[p^n]$. We claim $G[p^n] + \Z_p^a = \ker(h)$. Note that $G[p^n] + \Z_p^a \subseteq \ker(h)$. Let $x \in  \ker(h)$. Then one has $p^nx \in p^n \Z_p^a$, equivalently $p^n(x-c)=0$ for some $c \in \Z_p^a$. We conclude $x \in G[p^n]+\Z_p^a$. Since $\pi$ is surjective, and trivial on $\Z_p^a$, and injective on $G[p^n]$, the kernel of $g$ is isomorphic to $G[p^n]$.

Assume that $g$ is injective. One has $\tau^{-1} \mathrm{im}(g)=p^n G$, and from this the final result follows easily. 

\end{proof}

\begin{lem} \label{183}
Let $k$ be a finite field of characteristic $p$. Consider the additive polynomial $f=X-a X^p \in k[x]$ with $a \in k^*$. Then $f$ has all its roots in $k$ is and only if $a$ is a $(p-1)$st power, that is, if and only if $N_{k/\F_p}(a)=a^{(\# k-1)/(p-1)}=1$. 
\end{lem}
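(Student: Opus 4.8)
The plan is to reduce the question of whether $f$ splits completely to a statement about $(p-1)$st powers in the cyclic group $k^*$, and then to identify that condition with the vanishing of the norm. To set this up, I would first factor $f = X - aX^p = -aX\bigl(X^{p-1} - a^{-1}\bigr)$ and note that $f' = 1$, so $f$ is separable of degree $p$. Hence $f$ has exactly $p$ distinct roots in $\ol{k}$: the root $0$ together with the $p-1$ roots of $g := X^{p-1} - a^{-1}$. Thus ``$f$ has all its roots in $k$'' is equivalent to ``$g$ splits completely over $k$''.

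Next I would analyze when $g$ splits over $k$. The key point is that $\F_p^* \subseteq k^*$, so $p-1$ divides $\#k - 1$ and, equivalently, $X^{p-1} - 1$ already splits completely over $k$. Therefore, as soon as $g$ has a single root $b \in k$ — which is automatically nonzero since $b^{p-1} = a^{-1} \neq 0$ — every root of $g$ has the form $b\zeta$ with $\zeta^{p-1}=1$, hence lies in $k$, so $g$ splits over $k$. Conversely, if $g$ splits over $k$ then it has such a root $b$ and $a^{-1} = b^{p-1}$. Since the $(p-1)$st powers form a subgroup of $k^*$, we obtain the equivalences: $g$ splits over $k$ $\iff$ $a^{-1} \in (k^*)^{p-1}$ $\iff$ $a \in (k^*)^{p-1}$.

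Finally I would translate the $(p-1)$st-power condition into the norm condition via the cyclic structure of $k^*$. The group $k^*$ is cyclic of order $\#k - 1$, and since $p-1 \mid \#k - 1$, the subgroup $(k^*)^{p-1}$ is the unique subgroup of index $p-1$, namely $\{x \in k^* : x^{(\#k-1)/(p-1)} = 1\}$. Hence $a \in (k^*)^{p-1}$ if and only if $a^{(\#k-1)/(p-1)} = 1$. Writing $d = [k:\F_p]$ so that $\#k = p^d$, the norm is $N_{k/\F_p}(a) = \prod_{i=0}^{d-1} a^{p^i} = a^{(p^d-1)/(p-1)} = a^{(\#k-1)/(p-1)}$, which yields the stated equivalence with $N_{k/\F_p}(a) = 1$.

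There is no real obstacle here; the one step that deserves a moment's care is the divisibility $p-1 \mid \#k - 1$ (equivalently: $X^{p-1}-1$ splits over $k$), since it is precisely this that lets a single root of $g$ in $k$ force all of the roots of $g$ into $k$. Without it, $g$ could have a root in $k$ without splitting over $k$, and the clean equivalence would fail. The remaining steps are routine bookkeeping with the cyclic group $k^*$ and the explicit formula for the norm map.
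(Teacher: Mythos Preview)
Your argument is correct and follows essentially the same route as the paper: reduce ``all roots of $f$ lie in $k$'' to ``$X^{p-1}=a^{-1}$ has a solution in $k$'', i.e.\ to $a$ being a $(p-1)$st power, and then identify this with $N_{k/\F_p}(a)=1$. The only cosmetic difference is that the paper justifies ``one nonzero root implies all roots'' via the additive structure (the roots of the additive polynomial $f$ form a subgroup of $\overline{k}^+$ isomorphic to $\Z/p\Z$), whereas you argue multiplicatively via $\mu_{p-1}=\F_p^*\subset k^*$; you also spell out the norm computation that the paper leaves implicit in the statement.
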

\begin{proof}
The roots in $\overline{k}$ form a subgroup form a group isomorphic to $\Z/p\Z$. The polynomial $f$ has a non-trivial root in $k$ if and only if $1/a=X^{p-1}$ has a solution, that is, if and only if $a$ is a $(p-1)$st power.
\end{proof}

\section{Weierstrass curves with additive reduction over $\OO_K$} \label{112}

As in Section \ref{weier}, let $K$ be a finite extension of $\Q_p$ of degree $n$. Let $\OO_K$ be the ring of integers of $K$, with maximal ideal $\mathfrak{m}_K$ and residue field $k$ and ramification degree $e=e(K/\Q_p)$.

In this section, we study Weierstrass curves over $\OO_K$ with additive reduction. 

\begin{lem} \label{999}
\label{ai_in_maxideaal}
Let $\EE/\OO_K$ be a Weierstrass curve with additive reduction. Then $\EE$ is $\OO_K$-isomorphic to a Weierstrass curve of the form
$$
Y^2 + a_1 XY + a_3 Y = X^3 + a_2 X^2 + a_4 X + a_6,
$$
where all $a_i$ lie in $\mathfrak{m}_K$.
\end{lem}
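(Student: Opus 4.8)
The plan is to exhibit the desired $\OO_K$-isomorphism as the composite of a translation and a shear, each with scaling factor $u=1$, that puts the cusp of $\wt\EE$ at the origin with tangent line $Y=0$; the resulting normal form is exactly the one in which every $\wt a_i$ vanishes, i.e.\ every $a_i$ lies in $\m_K$. First I would check that the cusp $S$ of $\wt\EE$ is $k$-rational. The point $(0:1:0)$ is always a smooth point of a Weierstrass curve, so $S$ is affine; and since additive reduction makes $\wt\EE$ a (geometrically irreducible) cuspidal cubic, $S$ is its unique singular point, so the reduced singular locus is a single $\Gal(\ol k/k)$-stable closed point and hence has residue field $k$. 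Lifting the coordinates of $S$ to $\OO_K$ and translating, I may assume $S=(0,0)$; substituting $(0,0)$ into the reduced Weierstrass equation and into its two partial derivatives then forces $\wt a_6=\wt a_4=\wt a_3=0$, so $a_3,a_4,a_6\in\m_K$.

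Next I would straighten the tangent cone. After the translation the tangent cone of $\wt\EE$ at $(0,0)$ is the binary quadratic form $\wt Q(X,Y)=Y^2+\wt a_1 XY-\wt a_2 X^2$, and ``cuspidal'' means precisely that $\wt Q$ is the square of a linear form in $X,Y$ over $\ol k$; since $\wt Q$ is monic in $Y$ we may write $\wt Q=(Y-\alpha X)^2$, so $\wt a_1=-2\alpha$ and $\wt a_2=-\alpha^2$. In fact $\alpha\in k$: for $p\neq2$ this is immediate from $\alpha=-\wt a_1/2$, while for $p=2$ one gets $\wt a_1=0$ and $\alpha^2=\wt a_2$, so $\alpha\in k$ because the finite field $k$ is perfect. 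Now lift $\alpha$ to $s\in\OO_K$ and apply the shear $X\mapsto X$, $Y\mapsto Y+sX$, which fixes the origin and is an $\OO_K$-isomorphism of Weierstrass curves. The standard change-of-variables formulas give new coefficients $a_3'=a_3$, $a_6'=a_6$, $a_4'=a_4-sa_3$ --- still in $\m_K$ --- and $a_1'=a_1+2s$, $a_2'=a_2-sa_1-s^2$; plugging in $\wt a_1=-2\alpha$, $\wt a_2=-\alpha^2$ shows $\wt a_1'=\wt a_2'=0$. Hence all $a_i'$ lie in $\m_K$, and the composite of the translation and the shear is the required $\OO_K$-isomorphism.

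The only genuinely delicate points are the two rationality statements --- that the cusp is defined over $k$, and that its tangent direction $\alpha$ is defined over $k$ (the latter needing perfectness of $k$ in residue characteristic $2$). Everything else is routine manipulation of the transformation table, which I would not carry out beyond the five coefficient formulas displayed above.
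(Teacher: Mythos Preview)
Your proof is correct and follows essentially the same route as the paper: translate the singular point of $\wt\EE$ to the origin (forcing $a_3,a_4,a_6\in\m_K$), then apply a shear $Y\mapsto Y+sX$ to rotate the tangent line to $Y=0$ (forcing $a_1,a_2\in\m_K$). The paper's proof is terser and simply asserts that the required $\alpha_1,\alpha_2\in\PGL_3(\OO_K)$ exist, whereas you supply the two rationality checks the paper leaves implicit---that the cusp is $k$-rational (as the unique singular point) and that the tangent slope $\alpha$ lies in $k$ (using perfectness of $k$ when $p=2$).
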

\begin{proof}
We construct an automorphism $\alpha \in \PGL_3(\OO_K)$ that maps $\EE$ to a Weierstrass curve of the desired form. Consider a translation $\alpha_1 \in \PGL_3(\OO_K)$ moving the singular point of the special fiber $\EE_k$ to $(0:0:1)$. The image $\EE_1 = \alpha_1(\EE)$ is a Weierstrass curve with coefficients satisfying $a_3,a_4,a_6$ in $\mm_K$. There exists a second automorphism $\alpha_2 \in \PGL_3(\OO_K)$, of the form $X' = X, Y' = Y + cX$, such that in the special fiber of $\alpha_2(\EE_1)$ the unique tangent at $(0:0:1)$ is given by $Y'=0$. The Weierstrass curve $\EE_2=\alpha_2(\EE_1)$ now has all its coefficients $a_1,a_2,a_3,a_4,a_6$ in $\mm_K$. One may thus take $\alpha = \alpha_2 \circ \alpha_1$.
\end{proof}

From now on we assume that $\EE/\OO_K$ is a Weierstrass curve given by (\ref{Weiereq}), and we suppose that the $a_i$ are contained in $\mm_K$. In particular, $\EE$ has additive reduction. 

\subsection{$\EE_i(K)$, $i>0$}

\begin{prop} \label{144}
Let $\EE/\OO_K$ be a Weierstrass curve given by (\ref{Weiereq}), and assume that the $a_i$ are contained in $\mm_K$. 
 For $i>e/(p-1)$ or if $p=2$ and $i \geq e/(p-1)$, one has $\EE_i (K)\cong_{\Z_p} \Z_p^n$ and $p\EE_{i}(K)=\EE_{i+e}(K)$. 
\end{prop}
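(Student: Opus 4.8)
The plan is to combine the logarithm isomorphism $\widehat{\EE}(\mm_K^i) \cong_{\Z_p} \m_K^i$ valid for $i > e/(p-1)$ (recalled in Section \ref{weier} from \cite[IV.6.4]{SI}) with Proposition \ref{fund_isom} identifying $\EE_i(K)$ with $\widehat{\EE}(\mm_K^i)$, and then observe that $\m_K^i$ is a free $\Z_p$-module of rank $n$. So for $i > e/(p-1)$ the statement $\EE_i(K) \cong_{\Z_p} \Z_p^n$ is immediate: as a $\Z_p$-module $\m_K^i \cong \OO_K$ (multiplication by a uniformizer raised to the appropriate power is a $\Z_p$-linear isomorphism $\OO_K \isom \m_K^i$), and $\OO_K$ is free of rank $n = [K:\Q_p]$ over $\Z_p$. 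Moreover $p\m_K^i = \m_K^{i + e}$ since $v_K(p) = e$, and transporting this equality back through the isomorphisms gives $p\EE_i(K) = \EE_{i+e}(K)$, using that the logarithm identifications are compatible with the inclusions of the filtration (the commuting square from \cite[IV.6.4]{SI} quoted above).

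The only real content is the improvement in the case $p = 2$, where we want to allow the boundary value $i = e/(p-1) = e$ (this is an integer exactly when $p-1 = 1 \mid e$, i.e. always for $p=2$). Here the general formal-group logarithm need not converge on all of $\m_K^e$, so I cannot simply quote \cite[IV.6.4]{SI}. Instead I would argue directly on the group $\widehat{\EE}(\m_K^e)$: first show it is torsion-free. A torsion element would be a nonzero $t \in \m_K^e$ with $[2^r](t) = 0$ for some $r$; using Lemma \ref{mult_by_p} (the $p$-divisibility and weight properties of the coefficients $b_j$ of $[2]$, noting here that all $a_i \in \m_K$ so $\wht$-homogeneous terms of weight $\geq 1$ acquire extra valuation), one checks $v_K([2](t)) = v_K(2t) = e + v_K(t) > v_K(t)$ for $t \in \m_K^e$, because the only term of $[2](t) = \sum b_j t^j$ that can compete with $2t$ in valuation is the linear one $b_1 t = 2t$ — every higher term $b_j t^j$ with $j$ not divisible by $p=2$ has $2 \mid b_j$ hence valuation $\geq e + jv_K(t)$, and each term with $j = 2^m$ even has $\wht(b_j) = j - 1 \geq 1$ so picks up an extra factor from the $a_i \in \m_K$, again raising the valuation strictly above $v_K(t)$. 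Hence $[2]$ is strictly valuation-increasing on $\m_K^e$, so no nonzero element is torsion, and in fact this shows $[2]$ maps $\widehat{\EE}(\m_K^e)$ into $\widehat{\EE}(\m_K^{e+e}) = \widehat{\EE}(\m_K^{2e})$.

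Next I would pin down the $\Z_2$-module structure. Since $\widehat{\EE}(\m_K^e)$ is a finitely generated torsion-free $\Z_2$-module (finite generation from Proposition \ref{888} and its proof applied to this subgroup, or directly since $\m_K^e$ is), it is free; to compute its rank I would use that $\widehat{\EE}(\m_K^e)/\widehat{\EE}(\m_K^{e+1}) \cong k^+$ has $\F_2$-dimension $n$, together with the valuation computation above showing $[2]\widehat{\EE}(\m_K^e) \subseteq \widehat{\EE}(\m_K^{2e})$ and, conversely, that every element of $\widehat{\EE}(\m_K^{2e})$ is $2$ times an element of $\widehat{\EE}(\m_K^e)$ — the latter from the fact that on the higher layers $\widehat{\EE}(\m_K^{i})$ for $i > e$ the map $[2]$ is already known to be the logarithm-transported multiplication and $2\m_K^i = \m_K^{i+e}$, combined with a one-step lifting using that $[2]$ induces multiplication by $2$ on associated graded pieces where it vanishes ($2 \equiv 0$ on $k^+$). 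This gives $\widehat{\EE}(\m_K^e)/2\widehat{\EE}(\m_K^e) = \widehat{\EE}(\m_K^e)/\widehat{\EE}(\m_K^{2e})$, which has $\F_2$-dimension $ne$ — wait, that would suggest rank $ne$, so I need to be careful: the correct count is that the rank equals $\dim_{\F_2} \widehat{\EE}(\m_K^e)/2\widehat{\EE}(\m_K^e)$, and since $2\widehat{\EE}(\m_K^e) = \widehat{\EE}(\m_K^{2e})$ only when I have also shown the reverse inclusion $\widehat{\EE}(\m_K^{2e}) \subseteq 2\widehat{\EE}(\m_K^e)$ (which by $2\m_K^{i} = \m_K^{i+e}$ on the logarithm range is consistent with $2 \cdot \m_K^e = \m_K^{2e}$, giving quotient of $\F_2$-dimension exactly $n$, not $ne$, because $\m_K^e/\m_K^{2e}$ has length $e$ over $\OO_K$ hence $\F_2$-dimension... $ef = ne$). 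Here is the subtlety, and it is the main obstacle: I must reconcile the "$\cong_{\Z_p} \Z_p^n$" claim with the naive layer count, which is resolved by noting $\widehat{\EE}(\m_K^e)$ is \emph{not} equal to $\m_K^e$ as a $\Z_2$-module in this boundary case — rather the logarithm, suitably normalized (dividing by an appropriate power of $2$), gives $\widehat{\EE}(\m_K^e) \cong_{\Z_2} \OO_K \cong \Z_2^n$, and the apparent discrepancy in the layer count disappears because $2\widehat{\EE}(\m_K^e)$ sits at level $\m_K^{2e}$ only after rescaling. The cleanest route, which I would follow in the actual proof, is to apply Lemma \ref{128} directly to the exact sequence $0 \to \widehat{\EE}(\m_K^{i_0}) \to \widehat{\EE}(\m_K^e) \to \widehat{\EE}(\m_K^e)/\widehat{\EE}(\m_K^{i_0}) \to 0$ for $i_0 = e+1 > e/(p-1)$, where the sub is already known free of rank $n$ by the unramified-range result and the quotient is finite $2$-power torsion, and then show the connecting map $g$ of Lemma \ref{128} is injective (equivalently, $\widehat{\EE}(\m_K^e)$ is torsion-free, which we proved) — Lemma \ref{128} then yields $\widehat{\EE}(\m_K^e) \cong \Z_2^n$ outright, sidestepping the layer-counting confusion entirely. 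Finally, $p\EE_i(K) = \EE_{i+e}(K)$ for $p=2,\ i \geq e$ follows from the valuation analysis ($[2]$ strictly increases valuation by exactly $e$ on each layer $\geq e$) together with surjectivity onto $\EE_{i+e}(K)$ obtained by the same one-layer-at-a-time lifting argument.
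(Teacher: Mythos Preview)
Your final route---apply Lemma \ref{128} to the one-step extension $0 \to \widehat{\EE}(\mm_K^{e+1}) \to \widehat{\EE}(\mm_K^{e}) \to k \to 0$ and show the map $g$ is injective---is exactly what the paper does, and your torsion-freeness argument via the valuation estimate on $[2](t)$ is correct (modulo the slip ``strictly above $v_K(t)$'', where you mean $v_K(2t)$, and ``$j=2^m$'' where you mean $j$ even). So the proof you eventually settle on is the paper's proof, and it works.

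The detour before that, however, contains real errors, not just vagueness. The ``subtlety'' you flag is a phantom: $\mm_K^e/\mm_K^{2e}$ has $\OO_K$-length $e$ and hence $\F_2$-dimension $ef=n$, not $ne$; with the correct arithmetic the layer count gives rank $n$ on the nose and there is nothing to reconcile. Consequently your claim that ``$\widehat{\EE}(\mm_K^e)$ is not equal to $\mm_K^e$ as a $\Z_2$-module in this boundary case'' is false---the whole point of the proposition (and of the last clause of Lemma \ref{128}, which the paper invokes) is that $\widehat{\EE}(\mm_K^e)$ \emph{is} identified with $\mm_K^e$, whence $2\EE_e(K)=\EE_{2e}(K)$ immediately. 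The paper also sharpens your argument slightly: rather than proving torsion-freeness first and then invoking $\ker(g)\cong G[p]$, it computes $g$ directly and shows it is literally $a\mapsto 2a$ (landing in $\widehat{\EE}(\mm_K^{i+e})/\widehat{\EE}(\mm_K^{i+e+1})$), which is visibly injective with image the top layer; this gives both the rank and the equality $p\EE_i(K)=\EE_{i+e}(K)$ in one stroke, without the separate ``one-layer-at-a-time lifting'' you sketch for surjectivity.
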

\begin{proof}
One has $\EE_i (K) \cong \widehat{\EE}(\mm_K^i)$ as we have seen before. If $i>e/(p-1)$, the result we want to prove is precisely \cite[IV.6.4]{SI}. 

Assume $p=2$ and $i>e/(p-1)-1=e-1$. Note $i  \geq 1$ and one has $i+1>e/(p-1)$. We have the exact sequence
\begin{align*}
0 \to \widehat{\EE}(\mm_K^{i+1}) \to \widehat{\EE}(\mm_K^i) \to \m^i/\m^{i+1} \cong k \to 0.
\end{align*}
One has $2 \widehat{\EE}(\mm_K^{i+1})=\widehat{\EE}(\mm_K^{i+1+e})$. Set $[2](T)=\sum_j b_j T^j$. By Lemma \ref{128} we have an induced map 
\begin{align*}
g: \m_K^i/\m_K^{i+1} \to& \widehat{\EE}(\mm_K^{i+1})/\widehat{\EE}(\mm_K^{i+1+e}) \\
a \pmod{\m_K^{i+1}} \mapsto& [2](a)= \sum_j b_j a^j \pmod{\widehat{\EE}(\mm_K^{i+1+e})}.
\end{align*}
We then use Lemma \ref{mult_by_p}. If $j>1$ and $2 \nmid j$, one has $v(b_j) \geq e+1$ and hence $v(b_j a^j) \geq v(b_j)+v(a) \geq e+i+1$. If $j=2j' \geq 2$, then one has $v(b_j)  \geq 1=p-1$ (here we require $p=2$) and $v( b_j a^j) \geq 1 + 2i > e+i$. As $b_0=2$, the induced map is just $g: a \mapsto 2a$, which is injective. The image of $g$ is $\widehat{\EE}(\mm_K^{i+e})/\widehat{\EE}(\mm_K^{i+1+e})$.
We can identify  $\widehat{\EE}(\mm_K^{i+e})$ with $\m_K^{i+e}$ through the logarithm and by Lemma \ref{128} we identify  $\widehat{\EE}(\mm_K^{i})$  with $1/2 \m_K^{i+e}=\m_K^i$. In other words, one has $2\EE_{i}(K)=\EE_{i+e}(K)$.
\end{proof}

\subsection{$\EE_0(K)$: general theory}

If we let $F$ denote the formal group law of $\EE$, then the assumption on the $a_i$ implies that $F(u,v)$ converges to an element of $\OO_K$ for all $u,v \in \OO_K$. Hence $F$ can be seen to induce a group structure on $\OO_K$, extending the group structure on $\widehat{\EE}(\mm_K)$. The same statement holds true when we replace $K$ by a finite field extension $L$.
\begin{defn}
\upshape
Let $\EE/\OO_K$ be a Weierstrass curve given by (\ref{Weiereq}), and assume that the $a_i$ are contained in $\mm_K$. For any finite field extension $L \supseteq K$, we denote by $\widehat{\EE}(\OO_L)$ the topological group obtained by endowing the space $\OO_L$ with the group structure induced by $F$.
\end{defn}
The group $\widehat{\EE}(\OO_K)$ is a $\Z_p$-module, and comes with the $\Z_p$-module topology (equivalently, the $p$-adic topology from the set $\OO_K$).

\begin{thm} 
\label{tocomputewith}
Let $\EE/\OO_K$ be a Weierstrass curve given by (\ref{Weiereq}), and assume that the $a_i$ are contained in $\mm_K$. 
\begin{enumerate}\itemsep=0pt
\item \label{Fhomeo} The map $\Psi : \EE_0(K) \rightarrow \widehat{\EE}(\OO_K)$ that sends $(x,y)$ to $-x/y$ is an isomorphism of topological groups.
\item \label{pgt7} If $6e < p-1$, then one has $\EE_0(K) \cong_{\Z_p} \Z_p^n$. 
\item One has the following commutative diagram, where $\EE_0(K)  \to \wt{\EE}_{\textnormal{sm}}(k) \cong k^+$ is the reduction map:
\begin{align*}
\xymatrix{
\EE_0(K) \ar[rd]^{(x,y) \mapsto \frac{\overline{-x}}{\overline{y}}} \ar[dd]_{\Psi: (x,y) \mapsto -x/y} & \\
& k^+ \\
\widehat{\EE}(\OO_K) \ar[ru]_{x \mapsto \bar{x}} & .
}
\end{align*}
\end{enumerate}
\end{thm}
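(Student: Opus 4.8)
The plan is to prove the three parts essentially by reducing everything to statements about the formal group $\widehat{\EE}$ together with the explicit formula $(x,y) \mapsto -x/y$, exploiting the fact that the reduction curve is in the normalized cuspidal form $Y^2 = X^3$ (so that by \cite[III.2.5]{SI} the parametrization of $\wt{\EE}_{\sm}(k)$ is $(x,y)\mapsto -x/y$). First I would observe that, since all $a_i$ lie in $\mm_K$, the reduction curve is exactly $Y^2 = X^3$ with cusp at $(0:0:1)$ and unique tangent direction $Y = 0$; this is what makes the chosen formulas the natural ones.

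For part (\ref{Fhomeo}), I would argue that $\Psi$ is the "same" map as $\psi_K$ of Proposition \ref{fund_isom} but now defined on all of $\EE_0(K)$, not just the kernel of reduction. The content is: (a) $\Psi$ is well-defined and takes values in $\OO_K$ — for a point $(x,y)\in\EE_0(K)$ that does not reduce to $\infty$, one has $v_K(x), v_K(y) \geq 0$ and in fact $v_K(y)=0$ unless the point is in $\EE_1(K)$, so $-x/y\in\OO_K$; for points of $\EE_1(K)$ this is Proposition \ref{fund_isom}; (b) $\Psi$ is a group homomorphism — on $\EE_1(K)$ this is Proposition \ref{fund_isom}, and since the group law on $\widehat{\EE}(\OO_K)$ is given by the \emph{same} power series $F$ that converges on all of $\OO_K$ (by the remark preceding the Definition), the addition formulas on the Weierstrass model agree with $F$ wherever both make sense; the cleanest way is to note that $\Psi$ restricted to $\EE_1(K)$ is an isomorphism onto $\widehat{\EE}(\mm_K)$, that $\EE_0(K)/\EE_1(K) \cong k^+ \cong \OO_K/\mm_K = \widehat{\EE}(\OO_K)/\widehat{\EE}(\mm_K)$ compatibly (this is precisely part 3, so I would actually prove part 3 first), and then conclude $\Psi$ is bijective by the five lemma and a homeomorphism because it is continuous with continuous inverse on each piece and respects the open filtration; (c) $\Psi$ is a homeomorphism — follows from Proposition \ref{fund_isom} on $\EE_1(K)$ and the fact that $\EE_1(K)$ is open of finite index in $\EE_0(K)$ while $\widehat{\EE}(\mm_K)$ is open of finite index in $\widehat{\EE}(\OO_K)$.

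For part 3, the commutativity is a direct computation: the reduction map $\EE_0(K)\to\wt{\EE}_{\sm}(k)$ followed by the isomorphism $\wt{\EE}_{\sm}(k)\isom k^+$, $(x,y)\mapsto -x/y$, sends $(x,y)\in\EE_0(K)$ to $-\bar x/\bar y$; on the other hand $\Psi$ sends it to $-x/y\in\OO_K$, whose reduction is $\overline{-x/y} = -\bar x/\bar y$ (the reduction of $\OO_K$ is a ring map). So both legs give $-\bar x/\bar y$ and the triangle commutes; one just has to check the map $\OO_K\to\OO_K/\mm_K$ is indeed the one making $\widehat{\EE}(\OO_K)\to\widehat{\EE}(\OO_K)/\widehat{\EE}(\mm_K)$, which is immediate since $\widehat{\EE}(\mm_K)$ corresponds to $\mm_K$ as a set and the group law reduces to addition mod $\mm_K$.

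For part (\ref{pgt7}), I would use the hypothesis $6e < p-1$. By Proposition \ref{144} (which needs $i > e/(p-1)$, and $e/(p-1) < 1/6 < 1$, so $i = 1$ works), one has $\EE_1(K) \cong_{\Z_p}\Z_p^n$ with $p\EE_1(K) = \EE_{1+e}(K)$. Now apply Lemma \ref{128} to the exact sequence $0\to\EE_1(K)\to\EE_0(K)\to k^+\to 0$ with $a = n$ and $n = 1$ (so $H = k^+$ is $p$-torsion): the induced map $g: k^+ \to \EE_1(K)/p\EE_1(K) = \EE_1(K)/\EE_{1+e}(K)$ sends a class to $p$ times a lift. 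Via $\Psi$ this is $x \pmod{\mm_K} \mapsto [p](x) \pmod{\mm_K^{1+e}}$ where $[p] = \sum b_j T^j$. By Lemma \ref{mult_by_p}, $\wt(b_j) = j-1$, so $v_K(b_j) \geq e(j-1)/?$— more precisely, since each $a_i\in\mm_K$ has $v_K(a_i)\geq 1$ and $b_j$ is a weighted-homogeneous polynomial of weight $j-1$ in the $a_i$, one gets $v_K(b_j)\geq \lceil (j-1)/6\rceil$ for $j\geq 2$ (any monomial of weight $j-1$ involves at least $(j-1)/6$ of the $a_i$'s counted with multiplicity since $\wt(a_i)\leq 6$), hence $v_K(b_j a^j) \geq \lceil(j-1)/6\rceil + j$. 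For $j\geq 2$ this exceeds $e$ as soon as... I would check $\lceil(j-1)/6\rceil + j > e$: for $j=2$ this is $1+2 = 3 > e$ since $e < (p-1)/6 \leq$ and we only need it when... actually since $6e < p-1 \leq p$ forces $e$ to be small relative to $p$, but $e$ itself could be large if $p$ is large; the inequality needed is just $\lceil(j-1)/6\rceil + j > e$ for all $j \geq 2$, and the worst case $j=2$ gives $3 > e$, which is NOT automatic. So the hard part is getting the valuation estimate right. The correct route: $b_1 = p$ (since $[p](T) = pT + \cdots$), so $v_K(b_1 a) = v_K(p) + v_K(a) = e + 1$ exactly when $v_K(a)=1$; for $j\geq 2$ one needs $v_K(b_j a^j) > e + 1$ is false in general — rather one wants $v_K(b_j a^j) \geq e+1$ with the dominant term being $b_1 a = pa$. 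Here is the clean statement: for $p\nmid j$, $p\mid b_j$ so $v_K(b_j)\geq e$, giving $v_K(b_j a^j)\geq e+j\geq e+2 > e+1$ for $j\geq 2$; for $p\mid j$, write $j = p^s m$ with $p\nmid m$, and use $\wt(b_j) = j-1\geq p-1 > 6e$, so $v_K(b_j)\geq$ (number of $a_i$-factors) $\geq (j-1)/6 > e$, hence again $v_K(b_j a^j) > e + j \geq e+1$. Thus $g$ is multiplication by $p$ composed with the identification, i.e. $g(x\bmod\mm_K) = px\bmod\mm_K^{1+e}$, which is \emph{injective} (it's the iso $k^+\isom\mm_K^e/\mm_K^{1+e}$). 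By Lemma \ref{128}, $\ker g = 0$, so $\EE_0(K)\cong_{\Z_p}\Z_p^n\oplus\ker g = \Z_p^n$.

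\medskip

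\noindent\textbf{Where the difficulty lies.} The main obstacle is part (\ref{pgt7}): making the valuation bookkeeping on the coefficients $b_j$ of $[p]$ completely rigorous. One must show that for every $j\geq 2$ the term $b_j a^j$ has $K$-valuation strictly greater than that of the linear term $b_1 a = pa$ (which is $e + v_K(a)$), so that mod $\mm_K^{1+e}$ the map $g$ is exactly multiplication by $p$ and hence injective; the two regimes $p\nmid j$ (use $p\mid b_j$) and $p\mid j$ (use $\wt(b_j)=j-1\geq p-1>6e$ together with $\wt(a_i)\leq 6$, $v_K(a_i)\geq 1$ to force many prime-divisor factors in $b_j$) must be combined carefully, and it is here that the hypothesis $6e<p-1$ is genuinely used. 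Parts (\ref{Fhomeo}) and 3 are essentially formal once one has set up $\widehat{\EE}(\OO_K)$ and invoked Proposition \ref{fund_isom}, \cite[VII.2.2]{SI} and \cite[III.2.5]{SI}.
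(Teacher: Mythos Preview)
Your argument for part~3 is fine and matches the paper. The trouble is part~(\ref{Fhomeo}): you assert that ``the addition formulas on the Weierstrass model agree with $F$ wherever both make sense'', but this is exactly the content of the statement and you do not prove it. The construction of $F$ in \cite[IV.1]{SI} is purely formal; the fact that for $u,v\in\OO_K$ (rather than $\mm_K$) the convergent value $F(u,v)$ really equals $-x(P+Q)/y(P+Q)$ is not automatic. Your five-lemma step cannot rescue this, because the five lemma needs $\Psi$ to already be a morphism of short exact sequences; the homomorphism property is an input, not an output. Nor can you use density: $\EE_1(K)$ is open \emph{and closed} in $\EE_0(K)$, so knowing $\Psi$ is a homomorphism on $\EE_1(K)\times\EE_1(K)$ says nothing about the other cosets.

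The paper fills precisely this gap with a twist argument: set $L = K(\rho)$ with $\rho^6 = \pi$, and define the Weierstrass curve $\DD/\OO_L$ with coefficients $\alpha_i = a_i/\rho^i \in \OO_L$. The change of variables $\phi(X,Y) = (X/\rho^2, Y/\rho^3)$ gives an $L$-isomorphism $\EE_L \cong \DD_L$ that carries $\EE_0(L)$ bijectively onto $\DD_1(L)$. One then checks the weight-homogeneity identity $F_{\widehat{\DD}}(\rho X,\rho Y) = \rho\, F_{\widehat{\EE}}(X,Y)$, so that multiplication by $\rho$ is a group isomorphism $\widehat{\EE}(\OO_L) \to \widehat{\DD}(\mm_L)$. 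Since $\psi_L : \DD_1(L) \to \widehat{\DD}(\mm_L)$ is the standard formal-group isomorphism of Proposition~\ref{fund_isom}, the commutative square forces $\Psi_L$ (and hence its Galois-invariant restriction $\Psi$) to be a group isomorphism. This trick of ``pushing $\EE_0$ into a $\DD_1$ over a ramified extension'' is the one idea your proposal is missing.

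For part~(\ref{pgt7}) your route via Lemma~\ref{128} and the valuation estimates on the coefficients $b_j$ of $[p]$ is correct and is, in fact, exactly what the paper does in the subsequent subsection to handle the exceptional primes $p\le 7$. However, the paper's own proof of~(\ref{pgt7}) is softer: once the diagram is in place, $\EE_0(K)$ injects into $\DD_1(L) \cong \mm_L$ (using $v_L(p)=6e<p-1$ and \cite[IV.6.4]{SI}), hence is torsion-free, hence $\cong\Z_p^n$. Your computation is more hands-on but still relies on part~(\ref{Fhomeo}) to interpret $p\cdot P$ as $[p](\Psi(P))$, so the gap above propagates.
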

\begin{proof}
Let $\pi$ be a uniformizer for $\OO_K$. Consider the field extension $L = K(\rho)$ with $\rho^6 = \pi$. Then define the Weierstrass curve $\DD$ over $\OO_L$ by
$$
Y^2 + \alpha_1 XY + \alpha_3 Y = X^3 + \alpha_2 X^2 + \alpha_4 X + \alpha_6,
$$
where $\alpha_i = a_i / \rho^i$. There is a birational map $ \phi : \EE \times_{\OO_K} \OO_L \dashrightarrow \DD$, given by $\phi(X,Y)= (X/\rho^2,Y/\rho^3)$. The birational map $\phi$ induces an isomorphism on generic fibers, and hence a homeomorphism between $\EE(L)$ and $\DD(L)$. Using (\ref{defE1}) and the fact that we have $(x,y)\in \EE_0(L)$ if and only if $v_L(x),v_L(y)$ are both not greater than zero, one sees that $\phi$ induces a bijection $\EE_0(L) \stackrel{\sim}{\rightarrow} \DD_1(L)$, that all maps (\textit{a priori} just of sets) in the following diagram are well-defined, and that the diagram commutes:
\[
\xymatrix{ 
\EE_0(K) \ar[d]^{\Psi} \ar[r]^{\textnormal{incl}} &                  
\EE_0(L) \ar[d]^{\Psi_L}  \ar[r]^{\phi}       &            
\DD_1(L) \ar[d]^{\psi_L}                            \\\
\widehat{\EE}(\OO_K)  \ar[r]^{\textnormal{incl}} & \widehat{\EE}(\OO_L)  \ar[r]^{\cdot \rho} & \widehat{\DD}(\mm_L).
}
\]
Here the map $\Psi_{L} : \EE_0(L) \rightarrow \widehat{\EE}(\OO_L) $ is defined by $(x,y) \mapsto -x/y$, the rightmost lower horizontal arrow is multiplication by $\rho$, and the maps labeled $\,\textnormal{incl}\,$ are the obvious inclusions. Let $F_{\widehat{\DD}}$ be the formal group law of $\DD$. One calculates that
$$
\rho  F_{\widehat{\DD}}(X,Y) = F_{\widehat{\DD}}(\rho X, \rho Y).
$$ 
Hence $\cdot \rho$, $\psi_L$ (Proposition \ref{fund_isom}) and $\phi$ are homeomorphisms of groups. It follows that the group morphism $\Psi_L$ is a homeomorphism of groups. Hence $\Psi$ must be a homeomorphism onto its image. The map $\Psi_L$ is Galois-invariant, and hence by Galois theory, it follows that $\Psi$ is surjective, and that it is in fact a homeomorphism.

Now assume $6e(K/\Q_p) < p-1$, so that $v_L(p) = 6 v_K(p) = 6e(K/\Q_p)< p-1$. Now \cite[IV.6.4(b)]{SI} implies that $\EE_1(K)$ is isomorphic to $\mm_K$, and $\DD_1(L)$ to $\mm_L$. Since $\EE$ has additive reduction, we have $\wt{\EE}_{\sm}(k) \cong k^+$. We have an exact sequence
$$
0 \rightarrow \mm_K \rightarrow \EE_0(K) \rightarrow k^+ \rightarrow 0.
$$
In the diagram above, the topological group $\EE_0(K)$ is mapped homomorphically into the torsion-free group $\DD_1(L)$, hence it is itself torsion-free. It follows that $\EE_0(K)$ is isomorphic to $\Z_p^n$. This proves the second part.

The commutativity of the diagram follows directly.
\end{proof}

\subsection{$\EE_0(K)$: special cases}

In the previous subsection, we have seen that $\EE_0(K) \cong_{\Z_p} \Z_p^n$ if $p-1>6e$. We have $\EE_1(K) \cong_{\Z_p} \Z_p^n$ if $p-1>e$, or if $p=2$ and $e=1$ (Proposition \ref{144}). We will study what happens with $\EE_0(K)$ in the latter case, so assume $p-1>e$, or $p=2$ and $e=1$. We implicitly identify $\EE_i(K)$ with $ \widehat{\EE}(\mm_K^i)$. We have an exact sequence
\begin{align*}
0 \rightarrow \EE_1(K) \rightarrow \EE_0(K) \rightarrow \wt{\EE}_{\textnormal{sm}}(k) \cong k^+ \rightarrow 0.
\end{align*}
To compute the torsion of $\EE_0(K) $, we apply Lemma \ref{128}. We consider the induced group morphism $k \to \EE_1(K)/p \EE_1(K)= \EE_1(K)/\EE_{1+e}(K)$ (Proposition \ref{144}). Theorem \ref{tocomputewith} allows one to compute the map explicitly. Consider the power series $[p](T)$ corresponding to the curve. One obtains a map $g=[p](T): k \to \widehat{\EE}(\mm_K^1)/\widehat{\EE}(\mm_K^{1+e})$. In general, the torsion of $\EE_0(K)$ will be isomorphic to $\mathrm{ker}(g)$.  If $[p](T)= \sum_i b_i T^i$, one can show that only the terms $pT+\sum_{j>0:\ (pj-1)/6 \leq e} b_{pj} T^{pj}$ play a role (Lemma \ref{mult_by_p}). We see that the map is just $[p]T=pT$ if $(p-1)/6>e$, hence the kernel of $g$ is trivial, and we recover part of Theorem \ref{tocomputewith}. Given a field $K$, one can explicitly find conditions on the $a_i$ which determine the kernel of $g$ and hence the structure of $\EE_0(K)$. As an example, we study the case where $e=1$. 

\subsubsection{$e=1$}
This is the case where $K/\Q_p$ is unramified. We have a map $g: k \to \widehat{\EE}(\mm_K^1)/\widehat{\EE}(\mm_K^{2}) \cong k$. Since the latter isomorphism is very explicit, we do not need logarithm to understand it. The map $g$ is given by a polynomial. For $p>7$, this is the identity map and there is no torsion. 

For $p=3,5,7$ this polynomial is of the form $T-a T^p$. If $n=1$, then $\EE_0(K) \cong_{\Z_p} \Z_p \times \Z/p\Z$ if $a=1$ and $\Z_p$ otherwise. If $n>1$, then Lemma \ref{183} tells us that this equation has a non-trivial solution over $k$ if and only if $a$ is a $(p-1)$st power in $k$. In that case, $\EE_0(K)$ is isomorphic to $\Z_p^n \times \Z/p\Z$, and otherwise it is $\Z_p^n$. 

If $p=2$, the polynomial is of the form $T-aT^2-bT^4$. Lemma \ref{183} can be used to handle the case when $n=1,2$, but otherwise it is more tricky to see what the kernel of this map is. It shows that $\EE_0(K)$ is isomorphic to $\Z_2^n \times (\Z/2\Z)^i$ with $i \in \{0,1,2\}$. Here $2^i$ is the number of solutions of $T-aT^2-bT^4=0$ in $k$. 

For $p=2,3,5,7$ we explicitly compute the polynomial $g=[p](T)/p \pmod{p \OO_K}$.  \vspace{0.2cm}

\begin{tabular}{| c | c | }
\hline
  $p=2$ &  \\ \hline
  $[p](T)$ & $2T - a_1T^2 -2a_2T^3 + (a_1a_2-7a_3)T^4 + \ldots$. \\
  $g$ & $T-\overline{a_1/2} T^2-\overline{a_3/2}T^4$. \\ \hline
\end{tabular} \vspace{0.2cm}

\begin{tabular}{| c | c | }
\hline
  $p=3$ &  \\ \hline
  $[p](T)$ & $3T - 3a_1T^2 +(a_1^2-8a_2)T^3 + (12a_1a_2 - 39a_3)T^4 + \ldots$. \\
  $g$ & $T-\overline{8a_2/3} T^3$. \\ \hline
\end{tabular} \vspace{0.2cm}

\begin{tabular}{| c | c | }
\hline
  $p=5$ &  \\ \hline
  $[p](T)$ & $5T - 1248a_4 T^5 + \ldots$. \\
  $g$ & $T-\overline{3a_4/5} T^5. $. \\ \hline
\end{tabular} \vspace{0.2cm}

\begin{tabular}{| c | c | }
\hline
  $p=7$ &  \\ \hline
  $[p](T)$ & $7T - 6720a_4T^5 - 352944a_6 T^7 + \ldots$. \\
  $g$ & $T-\overline{4a_6/7} T^7$. \\ \hline
\end{tabular}  \vspace{0.2cm}

In all the above cases, if there is no torsion in $\EE_0(K)$, then the map $g$ is surjective, and by Lemma \ref{128} one has $p\EE_{0}(K)=\EE_1(K)$ (one identifies $\EE_0(K)$ with $\mathcal{O}_K$ after the isomorphism $\widehat{\EE}(\mm_K^1) \cong \m_K$). 

\begin{proof}[Proof of Theorem \ref{777} and Corollary \ref{555}]
Follows from the discussion above (and Lemma \ref{183}).
\end{proof}

We will now list some examples of elliptic curves over $\Q_p$ with additive reduction, such that their points of good reduction contain $p$-torsion points. In particular, all curves and torsion points are defined over $\Q$. We also have an example of a curve over $\Q_2(\zeta_3)$ having full $2$-torsion in $\Q(\zeta_3)$.

\begin{ex}\upshape
The elliptic curve
$$
E_2 : Y^2 + 2 Y = X^3 - 2
$$
has additive reduction at 2, and its 2-torsion point $(1,-1)$ is of good reduction. 

In fact, if one considers this curve over $\Q_2(\zeta_3)$, the unramified extension of $\Q_2$ of degree $2$, then the $2$ torsion contains $4$ points of good reduction defined over $\Q(\zeta_3)$:
\begin{align*}
\{\infty, (-\zeta_3,-1), (\zeta_3,-1), (1,-1) \}.
\end{align*}
\end{ex}

\begin{ex}\upshape
The elliptic curve
$$
E_3 : Y^2 = X^3 - 3X^2 + 3X
$$
has additive reduction at 3, and its 3-torsion point $(1,1)$ is of good reduction. 
\end{ex}

\begin{ex}\upshape
The elliptic curve
$$
E_5 : Y^2 - 5 Y = X^3 + 20X^2 - 15X
$$
has additive reduction at 5, and its 5-torsion point $(1,-1)$ is of good reduction. 
\end{ex}

\begin{ex}\upshape
The elliptic curve
$$
E_7 : Y^2 + 7XY  -28Y = X^3 + 7X- 35
$$
has additive reduction at 7, and its 7-torsion point $(2,1)$ is of good reduction. 
\end{ex}

\begin{ex}\upshape
Consider the curve $E_8: Y^2= X(X-2)(X-4)$ over $\Q$. This elliptic curve has additive reduction at $2$. One has $E_8(\Q)[2^{\infty}]=\{(0,0),(2,0), (4,0),\infty\}$, and this group is isomorphic to $V_4$. All torsion points, except $\infty$, map to the singular point of the reduction. In fact, over $\Q_2$, the points of good reduction are isomorphic to the group $\Z_2$. 
\end{ex}

Sometimes, we can use Theorem \ref{777} to show that rational points have infinite order.

\begin{ex}\upshape
Consider the curve $E_9\colon Y^2=X^3-2$ over $\Q$. This curve has additive reduction at $2$ and the point $(3,5)$ has good reduction. By Theorem \ref{777} the point has infinite order. Similarly, one can consider $E_{10}\colon Y^2=X^3+3$, which has additive reduction at $3$. The point $(1,2)$ has good reduction and infinite order by Theorem \ref{777}.
\end{ex}

\subsubsection{$e>1$}

One can generalize the results above slightly. Starting with a torsion free $\EE_i(K)$ such that $p \EE_i(K)=\EE_{i+e}(K)$, one can use Lemma \ref{128} to check if there is no torsion in $\EE_{i-1}(K)$. One of the problems in our construction, is that in general, even if $\EE_i(K)$ is torsion free, it is not true that $p \EE_{i}(K)=\EE_{i+e}(K)$, and hence our methods do not work directly. To do computations in this case, one needs to really compute the logarithm maps giving the isomorphism with $\m_K^i$. We will give a brief example in which we study a ramified case.

\begin{ex}\upshape
Take $K=\Q_2(\sqrt{2})$. One has $k=\F_2$ and $\m_K=\sqrt{2} \Z_2[\sqrt{2}]$. One has an exact sequence $0 \to \EE_1(K) \to \EE_0(K) \to k \to 0$. One has $\EE_i(K) \cong \m^i$ for $i \geq 1$ (Proposition \ref{144}). A computation gives $[2](T) = 2T-a_1T^2+(a_1a_2-7a_3)T^4+\ldots$. This polynomial induces a map $g: k \to \widehat{\EE}(\mm_K^1)/\widehat{\EE}(\mm_K^3)$, and the latter group is isomorphic to $\m_K/\m_K^3$ by a logarithm map. The map $k \to \m_K/\m_K^3$ is of the form $s \mapsto (\sqrt{2}c_0+2 c_1)s$ with $c_0, c_1 \in \{0,1\}$. Unfortunately, one needs the logarithm map explicitly to see which map this is in terms of the $a_i$. One then uses Lemma \ref{128} to describe the torsion of $\EE_0(K)$. If there is no torsion, one can descrite $\EE_0(K)$ as a subgroup of $K$.

\begin{align*}
\begin{tabular}{| c  | c | c | c | c | c| }
  \hline			
  $c_0$ & $c_1$ & $\mathrm{im}(g)$ & $\mathrm{ker}(g)$ & $\EE_0(K)$ & $\EE_0(K)[2]$ \\ \hline
$0$ & $0$ & $\{0\}=\m_K^3/\m_K^3 $ & $k$ & -- & $\Z/2\Z$ \\
$0$ & $1$ & $\{0,2\}=\m_K^2/\m_K^3$ & $0$ & $\mathcal{O}_K$ & $0$ \\
$1$ & $0$ & $\{0,\sqrt{2}\}$ & $0$ & $1\sqrt{2} \Z_p \oplus 2 \Z_p$ & $0$ \\
$1$ & $1$ & $\{0,\sqrt{2}+2\}$ & $0$ & $(1/\sqrt{2}+1)\Z_p \oplus 2 \Z_p$ & $0$ \\
\hline
\end{tabular}
\end{align*}
Without explicit computations of the logarithm, one cannot distinguish between the third and fourth option. In the cases $(c_0,c_1)=(1,0), (1,1)$, $\EE_0(K)$ is not identified with $\mathcal{O}_K$ and hence $p \EE_0(K) \neq \EE_2(K)$. 
\end{ex}

\section{Acknowledgements}
It is a pleasure to thank Ronald van Luijk and Sir Peter Swinnerton-Dyer for many useful remarks.

\end{document}